\documentclass[11pt,reqno]{amsart}

\usepackage[utf8]{inputenc}
\usepackage[T1]{fontenc}
\usepackage{amsmath, amssymb, amsfonts, amsthm}
\usepackage{mathrsfs}
\usepackage{enumitem}
\usepackage{hyperref}
\usepackage{graphicx}
\usepackage[
paper=a4paper,
left=27mm,
right=27mm,
top=30mm,
bottom=30mm
]{geometry}
\usepackage{tikz}
\usetikzlibrary{decorations.markings, arrows.meta, calc}
\hypersetup{
  colorlinks = true,
  linkcolor  = blue,
  citecolor  = blue,
  urlcolor   = blue
}

\theoremstyle{plain}
\newtheorem{thm}{Theorem}[section]

\newtheorem{lem}[thm]{Lemma}

\theoremstyle{definition}
\newtheorem{definition}[thm]{Definition}

\theoremstyle{remark}
\newtheorem{remark}[thm]{Remark}

\title[V.D. MAZUROV'S QUESTION ON FIXED POINTS OF
AUTOMORPHISMS]{V.D. MAZUROV'S QUESTION ON FIXED POINTS OF
	AUTOMORPHISMS OF FREE BURNSIDE GROUPS}

\author{V.\,S.~Atabekyan}
\address{Department of Mathematics, Yerevan State University, Armenia}
\email{avarujan@ysu.am}

\date{\today}

\subjclass[2020]{20F50, 20E36, 20F06, 20E06} 
\keywords{free Burnside group; automorphism; fixed point; periodic group, free product, van Kampen diagrams}

\begin{document}

\begin{abstract}
In the Kourovka Notebook, V.\,D.~Mazurov asked whether an automorphism $\alpha$ of prime order $m$ of the infinite free Burnside group $G=B(m,q)$ of prime period $q$ that cyclically permutes the free generators of $G$ must have a nontrivial fixed point. In this paper, we show that for $m\ge3$ the answer is negative for all sufficiently large prime periods $q$. Precisely, we prove that for any odd $m\ge3$, there exists $q_m>0$ such that for every prime period $q>q_m$, the automorphism of $B(m,q)$ of order $m$ that cyclically permutes its free generators is fixed-point-free.
\end{abstract}
\maketitle
\section{Introduction}
In the Kourovka Notebook \cite{KN}, V.\,D.~Mazurov posed the following question (see \cite{KN}, Question 17.70): \textit{Let $\alpha$ be an automorphism of prime order $m$ of the infinite free Burnside group $G=B(m,q)$ of prime period $q$, cyclically permuting the free generators of $G$. Is it true that $\alpha$ has fixed points}?
In connection with this question, in \cite{AtAs19}, for any automorphism of order 2 of a nonabelian periodic group $G$ of odd period $q$, nontrivial fixed points are constructed explicitly. For example, according to \cite{AtAs19}, if $\beta(a)=b$ and $\beta(b)=a$, then $a\beta(a^{-1}b)^{\frac{q+1}{2}}$ is a nontrivial fixed point for $\beta$. Thus, the problem stated above has a positive answer for $m=2$.
The purpose of the present paper is to prove that for $m\ge3$ the problem has a negative answer for all sufficiently large prime periods $q$. More precisely, the following statement holds.
\begin{thm}
	\label{mq} For any odd $m\ge3$ there exists $q_m>0$ such that for all prime periods $q>q_m$ an automorphism of order $m$ of the free Burnside group $B(m,q)$ of period $q$, cyclically permuting the free generators of $B(m,q)$, has no nontrivial fixed point.
\end{thm}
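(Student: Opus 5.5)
Let $G=B(m,q)$ with free generators $a_1,\dots,a_m$ and let $\alpha(a_i)=a_{i+1}$ (indices mod $m$). The plan is to realize $\alpha$ through the semidirect product $\Gamma=G\rtimes\langle\alpha\rangle$. Then turn the fixed-point question into a question about centralizers of the element $\alpha$ in a periodic-type quotient of a free product, which Adian--Olshanskii small-cancellation technology can answer.

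\textbf{Step 1: reduction to a centralizer.} Since $m$ is odd, $\gcd(m,q)=1$ for $q>m$ prime. Then $\Gamma$ is generated by $a=a_1$ and the order-$m$ element $t=\alpha$, because $a_{i+1}=t^{i}at^{-i}$. Moreover $\Gamma$ is the quotient of the free product $F=\langle a\rangle_\infty * \langle t\rangle_m$ by the relations $w^q=1$ for all $w$ in the kernel $N$ of $F\to\langle t\rangle$. Indeed, $N$ is free on $t^iat^{-i}$, $0\le i<m$, so $N/N^q\cong B(m,q)$. A nontrivial fixed point $g\in G$ of $\alpha$ is exactly a nontrivial element of $N/N^q$ commuting with $t$ in $\Gamma$. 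So the theorem is equivalent to
\[
C_\Gamma(t)=\langle t\rangle .
\]

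\textbf{Step 2: graded presentation of $\Gamma$.} Build a graded presentation of $\Gamma$ as a quotient of the free product $F$, in the style of Olshanskii's ``periodic quotients of free products'' (or Ivanov's). The defining relators are $A^q$, where $A$ ranges over the periods of rank $i$: cyclically reduced words in $N$ that are not conjugate to powers of shorter periods or to elements of the factors. Since $A\in N$ and $N$ has index $m$, every relation of $\Gamma$ follows from these $q$-th powers. For $q$ large relative to $m$ (this gives $q_m$), the usual small-cancellation conditions hold. Two standard consequences follow: every element of $\Gamma$ of finite order not conjugate into a factor is conjugate to a power of a period, and the lemmas on contiguity subdiagrams and on the structure of annular diagrams apply.

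\textbf{Step 3: the centralizer of $t$.} Suppose $g\in N$ is nontrivial in $\Gamma$ and $[g,t]=1$. Consider a reduced annular diagram with contours $t$ and $g^{-1}tg$. By the standard lemma on conjugacy of elements conjugate into a factor, after shortening $g$ in its double coset $\langle t\rangle g\langle t\rangle$ there are two possibilities. One is that $g$ is conjugate-equal in the free product to something in $\langle t\rangle$, which is impossible for $g\in N\setminus\{1\}$. The other is that the annular diagram contains a cell whose contour forms a long contiguity to both boundary components. The latter would mean that $t$ commutes with a large piece of a period $A$, forcing $tAt^{-1}$ and $A$ to share a long subword, that is, $A$ to be almost $t$-invariant up to cyclic shift. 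Equivalently, some power of $A$ would be $\alpha$-invariant modulo lower rank.

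Step 3 is the main obstacle. I expect to handle it by showing that if $tA^{k}t^{-1}$ and $A^{k'}$ are close, then $A$ is conjugate to $\alpha(A)$ in rank-lower quotients. That makes the group $\langle A,t\rangle$ finite-by-cyclic of order divisible by $mq$, and we need its centralizer computation. Since $\gcd(m,q)=1$, the element $t$ acts on the cyclic group $\langle A\rangle$ of order $q$ by an automorphism of order dividing $m$. A nontrivial commuting $g$ would have to lie in $\langle A\rangle$, hence be a power $A^s$ with $\alpha(A^s)=A^s$. To exclude this, argue at the level of the free product: a word fixed by the rotation $a_i\mapsto a_{i+1}$ and a cyclic permutation has the form $(u\,\alpha(u)\cdots\alpha^{m-1}(u))^{r}$, essentially $(ut)^{mr}$ written in $N$. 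Such an $A=(ut)^m$ is then an $m$-th power of the element $ut\notin N$ in $F$. In the graded presentation the $q$-th power of $ut$ is not imposed, only that of $(ut)^m$. This makes $ut$ of order $mq$ and $(ut)^m$ of order $q$, and $t$ does not commute with $(ut)^m$ unless $u\in\langle t\rangle$, which again gives a trivial $g$.

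Verifying that these ``$t$-symmetric'' periods are controlled by the same small-cancellation estimates, and choosing $q_m$ so that all contiguity degrees stay below the needed thresholds, is where the technical work lies. Once $C_\Gamma(t)=\langle t\rangle$ is established, $C_G(\alpha)=G\cap\langle t\rangle=1$ and the theorem follows.
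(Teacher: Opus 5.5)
Your Step 1 is correct, and it uses the same mechanism as the paper. A fixed point is a nontrivial element of the normal exponent-$q$ subgroup centralized by an element of order $m$, so you need $C_\Gamma(t)=\langle t\rangle$. But the argument then rests entirely on Step 2, which is asserted, not proved. You need a graded presentation of $\Gamma$ for which the analogues of Lemma~\ref{26.5} (cyclic centralizers) and Lemma~\ref{26.4}(3) ($\langle t\rangle$ lies in no larger cyclic subgroup) hold. With these, $C_\Gamma(t)=\langle t\rangle$ is immediate, exactly as in Lemma~\ref{l6}; without them nothing is proved.

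That theory cannot simply be quoted for $\Gamma$. The presentation has the rank-one relator $t^m$ with $m$ small and fixed, and its exponents depend on the image in $\mathbb{Z}_m$: $q$ on $N$, and $mq/\gcd(j,m)$ on $Nt^j$. So it is not of the type treated in \S\S25--26 of \cite{O}, where all exponents are large. Nor is it a $q$-periodic product as in Ch.~11 of \cite{O}, since in $\Gamma$ the element $at$ has order $mq$. Your choice of periods (words of $N$ not conjugate to powers of shorter periods) also admits proper powers such as $(ut)^m$ as periods, which the theory does not allow. The correct bookkeeping is simple periods $B$ with exponent $q$ times the order of the image of $B$ in $\mathbb{Z}_m$, i.e.\ the paper's $n_A=mpq/(k,mp)$ with $p=1$.

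This is exactly the difficulty the paper sidesteps. It gives the acting element $a$ the large order $mp$, so Olshanskii's results apply verbatim to \eqref{e1} and yield $C_G(a^p)=\langle a\rangle$ and $\langle a\rangle\cap\langle b\rangle^G=1$. The real work is then embedding $B(m,q)$ as $\langle y_1,\dots,y_m\rangle$ with $y_{i+1}=a^py_ia^{-p}$, via regular words and the diagram induction of Lemma~\ref{rw}.

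Step 3 does not fill this gap. One boundary component of your annular diagram is labelled $t$, of length one, so no cell can have a long contiguity to both components.

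More seriously, the concluding claim that $t$ does not commute with $(ut)^m$ cannot come from free-product considerations. For $u\in N$ one has $(ut)^m=u\,\alpha(u)\cdots\alpha^{m-1}(u)\in N$, so the claim is itself an instance of the theorem. Moreover, elements of order $m$ in the coset $Nt$ really can centralize such elements in $\Gamma$. If $qj\equiv1\pmod m$, then $s=(at)^{qj}$ has order $m$, lies in $Nt$, and commutes with $(at)^m=a_1a_2\cdots a_m\neq1$. What must be shown is that $t$ is not of this kind, i.e.\ that $\langle t\rangle$ is a maximal cyclic subgroup of $\Gamma$. That is a genuinely diagrammatic fact about $\Gamma$, and your sketch does not supply it.
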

By definition, the free
Burnside group $B(m,n)$ of period $n$ and rank $m$ has the following presentation
$$B(m,n)=\langle x_1, x_2, ..., x_m \mid X^n=1\rangle ,
$$
where $X$ ranges over the set of all words in the alphabet
$\{x_1^{\pm1},x_2^{\pm1},\ldots,x_m^{\pm1}\}$. The group $B(m,n)$ is the
quotient of the free group $F_m=\langle x_1,x_2,\ldots,x_m\rangle$ of rank $m$ by the normal
subgroup $F_{m}^n$, generated by all possible $n$-th powers of
elements of $F_m$. By the theorem of S.I.~Adian (see \cite{A}), the groups $B(m,n)$ are infinite for all odd $n\ge665$ and $m>1$. 

Free Burnside groups of sufficiently large odd period share many properties with absolutely free groups. For instance: all their abelian or finite subgroups are cyclic; the centralizer of every nontrivial element is cyclic; the word and conjugacy problems are solvable; the groups grow exponentially (Adian, \cite{A}); they are non-amenable (Adian, \cite{A82}) and, in fact, uniformly non-amenable (Osin, \cite{Os}; see also \cite{At09u}, \cite{At09m}); and they are approximated by free groups of smaller rank, among other shared properties (see \cite{O}, \cite{At87}, \cite{At07}).

There are, however, fundamental differences between the two classes of groups as well. Every subgroup of a free group is itself free (the so-called Schreier property), whereas A.\,Yu.~Olshanskii \cite{O03} showed that no proper normal subgroup of $B(m,n)$ is a free Burnside group (see also \cite{At10}). Nevertheless, every noncyclic subgroup of a free Burnside group contains a free Burnside group of infinite rank (see \cite{O}, \cite{At87}, \cite{At09i}, \cite{I}).

Regarding automorphisms of the groups $B(m,n)$, it is known that all of their normal automorphisms, as well as all splitting automorphisms of order $p^k$ ($p$ prime), are inner for all odd $n\ge1003$ (see \cite{Ch}-\cite{At14}). It is also known that, for the same values of the period $n$, the automorphism group $Aut(B(m,n))$ is complete (see \cite{At13p}-\cite{At13ij}), as is the automorphism group $Aut(F_m)$ of the free group $F_m$; moreover, for sufficiently large odd exponent $n$, the outer automorphism group $Out(B(m,n))$ contains a free subgroup (Coulon, \cite{Cou}), as does the group $Out(F_m)$.
Theorem \ref{mq} reveals yet another analogy between absolutely free groups and free Burnside groups of sufficiently large odd periods.

To prove Theorem \ref{mq}, in Section \ref{p2} we construct periodic groups of odd period $mpq$, where $m\ge3$ is an odd number and $p, q>m$ are two sufficiently large distinct prime numbers. In constructing these groups, we shall rely substantially on the well-known monograph of A.Yu.~Olshanskii \cite{O}. We shall prove the following Theorem \ref{mpq}, which is of independent interest, and from which Theorem \ref{mq} will be derived in Section \ref{pmq}.
\begin{thm}
	\label{mpq} For any odd number $m\ge3$ and sufficiently large distinct prime numbers $p>m$ and $q>pm$ there exists a periodic group $G$ with two generators $a,b$ of period $mpq$ such that
	\begin{enumerate}
		\item the generator $a$ has order $mp$,
		\item the normal closure $\langle b\rangle^G$ of the element $b$ in $G$ is a periodic group of period $q$,
		\item the inner automorphism $i_{a}$ acts on the subgroup $\langle b\rangle^G$ without nontrivial fixed points,
		\item the subgroup $\langle b\rangle^G$ contains a free Burnside group $H$ of rank $m$,
		\item under conjugation by the element $a^p$, the free generators of the subgroup $H$ are cyclically permuted.
	\end{enumerate}
\end{thm}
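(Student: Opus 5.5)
The plan is to obtain $G$ as a graded quotient, in the sense of Olshanskii's monograph \cite{O}, of the free product $F=\langle a\rangle_{mp}*\langle b\rangle_\infty$, with a variable exponent on each defining relator. Let $\varphi\colon F\to\langle a\rangle_{mp}$ kill $b$, and let $N_0=\ker\varphi$ be the normal closure of $b$ in $F$. By Kurosh, $N_0$ is free on the conjugates $b_i=a^iba^{-i}$, $0\le i<mp$.

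\textbf{The relators.} By induction on rank, exactly as in the construction of periodic quotients of free products in \cite{O}, I choose a set of periods $A$. These are cyclically reduced elements of $F$ that are not conjugate into a free factor and are simple in the rank in which they appear. For each such $A$ let $k(A)$ be the order of $\varphi(A)$, which divides $mp$, and impose the relator
$$A^{n(A)}=1,\qquad n(A)=q\,k(A).$$
All exponents are at least $q$, which is large. Moreover $q>pm$ and $mp<q$, so the ratio between the largest and smallest exponent is bounded by $mp$. This boundedness is what should let the graded-diagram machinery (contiguity estimates, $\Theta$-cells, the choice of parameters $\alpha,\beta,\dots$) go through with a single large parameter $q$. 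Let $G=F/\langle\langle A^{n(A)}\rangle\rangle$.

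From the standard conclusions of \cite{O} for such quotients I expect the following. The factors embed, so $a$ has order exactly $mp$, which gives (1). Every element of $G$ not conjugate into a factor is conjugate to a power of some period $A$. A nontrivial element of $N=\langle b\rangle^G$ is either conjugate into $\langle b\rangle$ or conjugate to $A^k$ with $\varphi(A^k)=1$. Then $k(A)\mid k$, so $(A^k)^q=1$. For the first case I plan to add the period $b$ with exponent $q$ from the outset, since $k(b)=1$. This gives (2). The period of $G$ is then $mpq$, because every $g\in G$ has $g^{mp}\in N$.

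\textbf{Fixed points, item (3).} A nontrivial fixed point $x$ of $i_a$ in $N$ would lie in the centralizer $C_G(a)$. In these quotients the centralizer of a nontrivial element of a free factor stays inside that factor, by the analogue of Olshanskii's lemma on malnormality of the images of factors. Hence $C_G(a)=\langle a\rangle$. Since $\varphi$ is injective on $\langle a\rangle$, we get $\langle a\rangle\cap N=1$, so $x=1$.

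\textbf{The subgroup $H$, items (4) and (5).} Fix a word $w=w(b_0,\dots,b_{mp-1})\in N_0$ built from long blocks in the $b_i$, chosen so that the $m$ elements
$$h_j=a^{pj}wa^{-pj},\qquad 0\le j<m,$$
satisfy the small-cancellation / aperiodicity conditions used in \cite{O} to show that a set of elements freely generates a free Burnside subgroup. These are the "$\mathcal{C}$-conditions" of the chapter on embeddings of $B(\infty,n)$. A concrete choice is $w=b\,a\,b^{2}a^{-1}\cdots$, designed so that no long subword of one $h_j$ occurs, up to cyclic shift or inversion, in another or in a relator.

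Then $H=\langle h_0,\dots,h_{m-1}\rangle\le N$, and $N$ has period $q$. The $h_j$ are independent modulo exactly the $q$-th power relations, so $H\cong B(m,q)$. Finally, $a^{pm}=1$ gives $a^p h_j a^{-p}=h_{j+1}$ with indices mod $m$, which is (5).

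\textbf{Main obstacle.} I expect two steps to carry the real work. The first is adapting the graded-diagram theory to relators with two kinds of exponents, $q$ and $qk$. The second is proving the centralizer statement together with the freeness of $H$ in this setting; the latter requires checking that the words $h_j$ form no long contiguity with relator cells in a reduced diagram. I would first try to reduce both to the corresponding theorems of \cite{O} on periodic quotients of free products by treating $qk(A)$ as "the" exponent with a uniformly bounded multiplier.
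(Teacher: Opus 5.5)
Your construction is the paper's. The exponent $q\,k(A)$, with $k(A)$ the order of $\varphi(A)$, equals the paper's $n_A=mpq/(k,mp)$. Your arguments for items (1), (2), (3) and (5) are those of Lemmas~\ref{l1}, \ref{l2}, \ref{l3} and \ref{l6}.

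Your first ``obstacle'' also disappears if you work, as the paper does, over the free group $\langle a,b\rangle$ with $a^{mp}$ and $b^q$ as relators of rank~$1$. That is exactly Olshanskii's \S25 setting with non-uniform exponents, so no new diagram theory is needed.

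The genuine gap is item (4), which is the substance of the theorem. The sentence ``the $h_j$ are independent modulo exactly the $q$-th power relations'' is the claim to be proved, and the mechanism you propose for it cannot work, for two reasons.
\begin{itemize}
\item Item (5) forces $h_j=a^{pj}wa^{-pj}$, so all the $h_j$ have the same cyclic reduction. No condition saying that long subwords of one generator do not occur, up to cyclic shift, in another can hold.
\item You cannot exclude long contiguity between relator cells and words in the $h_j$. For instance $(h_0h_1)^q=1$ in $G$, and by Lemma~\ref{16.2} every reduced diagram of nonzero rank has a cell with contiguity degree at least $\varepsilon$ to its contour.
\end{itemize}
The existing results on free Burnside subgroups of $B(m,n)$ do not transfer directly either. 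Here there are cells of exponent $qk(A)$ with $k(A)>1$, and the generators are conjugate to one another.

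What is needed is not the absence of such cells but a rigidity statement describing them. Suppose a cyclically reduced word $U$ represents a nontrivial element of $\langle h_0,\dots,h_{m-1}\rangle$ in $\mathbb{Z}_{mp}*\mathbb{Z}$ and contains a periodic subword $A^r$ with $r$ above a fixed bound. Then you need:
\begin{itemize}
\item a cyclic shift $A_1$ of $A$ itself represents an element of that subgroup;
\item hence $\varphi(A)=1$ and the cell has exponent exactly $q$, so cells with $k(A)>1$ never have long contiguity to such words;
\item $U=(U_1A_1^{q}U_1^{-1})\cdot U''$ with $U_1A_1U_1^{-1}$ and $U''$ again in the subgroup.
\end{itemize}
Excising the cell then gives an induction on the number of cells. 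Combined with injectivity of $x_i\mapsto h_{i-1}$ into $\mathbb{Z}_{mp}*\mathbb{Z}$, it writes $W$ as a product of conjugates of $q$-th powers in the free group on $x_1,\dots,x_m$ (Lemma~\ref{rw}).

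The paper obtains this rigidity from the free-product structure, not from small cancellation. It uses that $\langle a^p,y\rangle=\langle a^p\rangle*\langle y\rangle$, together with block normal forms. It also uses the bounds $|\sigma_a(T)|\le(m-1)p+1$ and $|\sigma_b(T)|\le 2$ on subwords $T$ of such words. These force the period to have zero exponent sums and be ``regular'' (Lemmas~\ref{sigma}, \ref{ast}, \ref{prsl}, \ref{cprsl}). Your proposal has no counterpart of this step, so item (4) remains unproved.
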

The group $G$, whose existence is asserted in Theorem \ref{mpq}, will be constructed in Section \ref{p2}. The proof of Theorem \ref{mpq} is given in Sections \ref{p2}--\ref{pmpq}. In Section \ref{p5} some initial properties of the group $G$ will be proved. In particular, items 1 and 2 of Theorem \ref{mpq} are proved there. In Section \ref{p3} we prove some facts about so-called regular words, representing special elements of the free product of two cyclic groups. In Section \ref{p4.1} the basic property of regular words representing the trivial element of the group $G$ is proved. Item 4 of Theorem \ref{mpq} is proved in Section \ref{p4}. The proof of Theorem \ref{mpq} is completed in Section \ref{pmpq}. Theorem \ref{mq} is proved in Section \ref{pmq}.

\section{Construction of a periodic group}\label{p2}
We choose the group alphabet $\{a^{\pm1},b^{\pm1}\}$ and denote by $G(0) = F_2$ the free group with basis $\{a,b\}$. We also fix an odd number $m\ge3$ and two sufficiently large prime numbers $p>m$ and $q>pm$.

In this section, we construct a group representing a simplified modification of the group $G(\infty)$, introduced by A.~Yu.~Olshanskii in [Ch. 8, \S25, item 1, \cite{O}]. Below, with the necessary changes, the definition of this construction is reproduced. Repeated references to the original source [Ch. 8, \S25, item 1, \cite{O}] within this section will be omitted.

Set $\mathcal{R}_0=\emptyset, \mathcal{P}_1 = \{a^{n_a}, b^{n_b}\}$, where $n_a=mp, n_b=q$. Further, denote $\mathcal{R}_1=\mathcal{R}_0\cup\mathcal{P}_1.$ The words $a$ and $b$ will be called periods of rank 1. Fix a natural number $i\ge2$ and suppose by induction that we have already defined the set of defining relations $\mathcal{R}_{i-1}$. Set $G(i-1) = \langle a, b \mid R = 1; R \in \mathcal{R}_{i-1} \rangle $.
Suppose that for each integer $j \ge 1$ a set $\mathcal{X}_j$ is defined, whose elements are called periods of rank $j$, $j < i$. We say that a word $X$ is a minimal word of rank $i-1$, $i \ge 1$, if $X = Y$ in $G(i-1)$ implies $|X| \le |Y|$. A nonempty word $A$ is called simple in rank $i-1$ if it is not conjugate in rank $i-1$ (that is, in $G(i-1)$) to a power of a shorter word and is not conjugate in rank $i-1$ to a power of a period of rank $k \le i-1$.

Now let $\mathcal{X}_i$ denote some maximal set of words of length $i$, simple in rank $i-1$, having the property that if $A, B \in \mathcal{X}_i$ and $A \neq B$, then $A$ is not conjugate in rank $i-1$ to $B$ or $B^{-1}$. The words of $\mathcal{X}_i$ are called periods of rank $i$. Using the identity $yx=xyy^{-1}x^{-1}yx=xy[y,x]$, any word $A$ from $\mathcal{X}_i$ is uniquely represented in the form $A=a^kb^sc$, where $k,s\in \mathbb{Z}$ are integers and $c$ is a word from the commutator subgroup $[F_2,F_2]$ of the group $F_2$.

The set of defining words $\mathcal{P}_i$ of rank $i\ge2$ is formed as follows. Let the period $A\in \mathcal{X}_i$ in the free group $G(0)=\langle a,b\rangle$ be represented in the form $A=a^kb^sc$, where $c$ is a commutator word. Since $a^kb^sc$ is a word simple in rank $i-1\ge1$ and $a^{pm}=1$, $b^q=1$ in rank 1, we may assume that $0\le k<mp$ and $0\le s<q$. For each period $A=a^kb^sc \in \mathcal{X}_i$ we include in $\mathcal{P}_i$ words of the form $A^{n_A}$, where $n_A={\frac{mpq}{(k,mp)}}$ and $(k,mp)$ is the greatest common divisor of the numbers $k$ and $mp$. Relations of the form $A^{\frac{mpq}{(k,mp)}}=1$ will be called defining relations of rank $i\ge2$. In particular, if $k=0$, i.e. if $A=b^sc \in \mathcal{X}_i$, then $(k,mp)=(0,mp)=mp$ and hence $A^q\in \mathcal{P}_i$. Words of the form $A=b^sc$ will be called words of type $(b)$. It is easy to see that words of type $(b)$ represent elements of the normal closure $\langle b\rangle^{G(0)}$ of the element $b$ in the free group $G(0)$, and conversely.
All words of $\mathcal{X}_i$ are simple words of rank $i-1$ and have length $i$. Therefore, if $X=a^kb^sc\in\mathcal{X}_i$, then $X\notin\langle a\rangle$ and $X\notin\langle b\rangle$ for $i\ge2$.
We set $\mathcal{R}_{i} = \mathcal{R}_{i-1} \cup \mathcal{P}_i$, $G(i) = \langle \mathcal{A} \mid R = 1; R \in \mathcal{R}_{i} \rangle$ and
\begin{equation}\label{e1} G = \langle \mathcal{A} \mid R = 1; R \in \mathcal{R} = \bigcup_{i=0}^\infty \mathcal{R}_i \rangle.
\end{equation}
By definition, the constructed group $G$ is a simplified modification of the group $G(\infty)$ from [Ch. 8, \S25, \cite{O}], in view of the absence of relations of the second type $(2)$ [Ch. 8, \S25, \cite{O}]. As a consequence, for the group $G$ \eqref{e1}, all statements (Lemmas 25.1–25.21, 26.1–26.5 and Theorems 26.1–26.5) proved in \S25 and 26 of Chapter 8 of Olshanskii's monograph \cite{O} remain valid.
For the reader's convenience, the statements of Theorems 26.4 and 26.5 of Chapter 8, Theorems 13.1, 16.2, and Lemma 19.4 of the monograph \cite{O}, adapted for the group $G$, are given below; we shall refer to them in Sections \ref{p5} and \ref{p4}.
\begin{lem}[see Theorem 26.4 \cite{O}]\label{26.4}\begin{enumerate}
    \item Any period $A$ has order $n_A$ in $G$.
		\item If each of the sets $\mathcal{X}_i$ is maximal for $i\ge1$, then every element $x$ of $G$ is conjugate in this group to a power of a period of some rank $j$.
		\item If $A$ is a period of some rank, then the subgroup $\langle A\rangle$ is not contained in any larger cyclic subgroup of the group $G$.
	\end{enumerate}
\end{lem}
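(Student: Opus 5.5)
The statement is Olshanskii's Theorem 26.4 from \cite{O}, transported to the group $G$ of \eqref{e1}. The plan is to re-run his graded-diagram argument for this presentation. It applies because $G$ is the group $G(\infty)$ of [Ch. 8, \S25, \cite{O}] with the relations of the second type removed. Removing a family of relations only removes a class of cells from the diagrams, so every inductive statement of \S\S25--26 (Lemmas 25.1--25.21, 26.1--26.5) keeps its proof. The one requirement is that the exponents $n_A$ satisfy the same largeness and parity hypotheses Olshanskii uses. Here $n_A=\frac{mpq}{(k,mp)}$ is always odd and at least $q$, hence large, and it divides the odd number $mpq$. So I would first verify the standing conditions of \S25, namely that the exponents are large and that the cancellation parameters can be chosen uniformly, and then inherit the lemmas.

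For item (1), first note that $A^{n_A}=1$ in $G$ by construction. It remains to show $A^t\neq 1$ for $0<t<n_A$, and I would argue by contradiction. Take a reduced diagram $\Delta$ of minimal type over $G(i)$ with contour label $A^t$, and view it as an annulus-free disk whose boundary is periodic with period $A$. Olshanskii's contiguity lemmas (the analogues of Lemmas 25.x) produce an $\mathcal{R}$-cell $\Pi$ with a large contiguity degree to the boundary. Since the boundary label is $A$-periodic, the periodicity lemma forces the period of $\Pi$ to be conjugate to $A^{\pm1}$. This reduces everything to the cyclic subgroup generated by a period. Rank-$1$ periods $a,b$ need separate handling: their orders $mp$ and $q$ are guaranteed already in $G(1)=\mathbb{Z}_{mp}*\mathbb{Z}_q$, and Olshanskii's lemmas show $G(1)$ embeds suitably at later ranks.

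Item (2) follows by induction on length. Take $x$ and a cyclically minimal representative $X$. If $X$ is simple in the rank $|X|-1$, then maximality of $\mathcal{X}_{|X|}$ puts $X$, or a conjugate of $X^{\pm1}$, in $\mathcal{X}_{|X|}$. Otherwise $X$ is conjugate to a power of a shorter word or of an earlier period, and we recurse.

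Item (3) I would deduce from the cyclicity of centralizers (Olshanskii's Theorem 26.5 analogue) together with item (1). Suppose $\langle A\rangle<\langle B\rangle$. By (2), $B$ is conjugate to $C^r$ with $C$ a period. Then $A$ is conjugate to a power of $C$, so a diagram argument as in (1) gives that $A$ and $C$ are conjugate up to inversion. The maximal choice of $\mathcal{X}_i$ then forces $A=C^{\pm1}$ and $|B|\ge|A|$, and order counting with item (1) yields equality of the subgroups. The main obstacle is item (1), specifically checking that the nonuniform exponents $n_A$, which depend on $k$, do not break the estimates in Lemmas 25.x. I would handle it by noting that Olshanskii's arguments only use lower bounds $n_A\ge n_0$ for a fixed large $n_0$, and that bound holds here since $n_A\ge q$.
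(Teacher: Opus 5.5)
Your proposal takes the same route as the paper. The paper does not prove this lemma on its own; it imports Olshanskii's Theorem 26.4 on exactly the grounds in your first paragraph, namely that $G$ is his $G(\infty)$ with the second-type relations removed. One slip: the uniform lower bound on the exponents is $n_a=mp$, not $q$ (since $mp<q$), which is still large because $p$ is large, and the rank-$1$ orders are in fact elementary here, since $G/\langle b\rangle^G\cong\mathbb{Z}_{mp}$ and $G/\langle a\rangle^G\cong\mathbb{Z}_q$.
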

\begin{lem}[see Theorem 26.5 \cite{O}]
	\label{26.5} The centralizer of a nontrivial element $X\in G$ is a cyclic group.
\end{lem}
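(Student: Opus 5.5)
This statement is Theorem 26.5 of \cite{O} transferred to the group $G$. So the plan is to reduce it to structural facts already available for $G$, chiefly Lemma \ref{26.4}, together with the graded-diagram machinery of \S\S25--26 of \cite{O}. We argue that those facts still hold here, because removing the relations of type $(2)$ only makes the presentation simpler.

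Let $X\neq 1$ and set $C=C_G(X)$.

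By Lemma \ref{26.4}(2), after conjugating we may assume $X=A^t$, where $A$ is a period of some rank $j$ and $A^t\neq 1$. Since conjugation carries centralizers to centralizers, it suffices to show that $C_G(A^t)=\langle A\rangle$. Clearly $\langle A\rangle\subseteq C_G(A^t)$.

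For the reverse inclusion, take $Y\in C_G(A^t)$, so $YA^tY^{-1}=A^t$. We then analyse a reduced annular (conjugacy) diagram $\Delta$ of rank $\infty$ whose two contours are labelled $A^t$ and $A^{-t}$, with a side labelled $Y$.

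The key lemmas from \S25 of \cite{O} are the analogues of Lemmas 25.x on conjugate powers of periods. A period $A$ of rank $j$ is cyclically reduced and simple in rank $j-1$. Using these lemmas we show that any such diagram either has no cells of rank $>j$ interacting essentially with the contours, or that such cells produce a contradiction with the smallness (Greendlinger-type) properties. The outcome is that $Y=A^{s}$ in $G$ for some $s$. This step is the same as in \cite{O}, and we cite it.

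The alternative formulation of the last step avoids the diagram details. The subgroup $\langle A, Y\rangle$ is abelian, because $Y$ commutes with $A^t$, and one shows that $Y$ commutes with $A$ itself. In a group where every element has finite order and every element is conjugate to a power of a period, a finite abelian subgroup that is not cyclic would contain a copy of $\mathbb Z_r\times\mathbb Z_r$. This is excluded by the analogue of Theorem 26.2 of \cite{O}, stating that finite subgroups of $G$ are cyclic. So $\langle A,Y\rangle$ is cyclic, and by Lemma \ref{26.4}(3) it equals $\langle A\rangle$.

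Thus $C_G(X)$ is contained in the cyclic group $\langle A\rangle$ (after conjugation), and hence is cyclic.

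The main obstacle is the step showing that $Y$ commuting with the power $A^t$ forces $Y$ to commute with $A$. Here the orders $n_A=mpq/(k,mp)$ are not prime, so $A^t$ could in principle have a larger centralizer than $A$. The first attempt would be the annular-diagram argument: the contour labelled $A^t$ is periodic, and the contiguity lemmas of \S25 force the side $Y$ to be equal to a power of $A$ modulo cells of rank $\le j$. Only if that fails would we fall back on the cyclicity of finite subgroups. Throughout, we only need to check that each cited lemma of \cite{O} uses the relations of type $(2)$ nowhere essential; the paper already asserts this.
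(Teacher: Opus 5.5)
Your proposal is in substance what the paper does. The paper gives no argument of its own: it imports Theorem 26.5 of \cite{O} for $G$, on the grounds that the omitted type-$(2)$ relations play no role in \S\S25--26. Your decisive step ($YA^tY^{-1}=A^t$ implies $Y\in\langle A\rangle$) is likewise cited from \cite{O}, and the reduction to $X=A^t$ via Lemma \ref{26.4} is routine.

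Your ``alternative formulation'' is not an independent route. It presupposes that $Y$ commutes with $A$, which is the entire content of the statement. So the whole argument rests on the citation, exactly as in the paper.
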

\begin{lem}[see Theorem 13.1 \cite{O}]
	\label{13.1} $W=1$ in the group $G$, given by the graded presentation \eqref{e1}, if and only if there exists a reduced graded disc diagram over the presentation \eqref{e1}, whose contour label is graphically equal to $W$.
\end{lem}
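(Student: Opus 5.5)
This lemma is Olshanskii's Theorem 13.1 specialized to the graded presentation \eqref{e1}, so the plan is to check that \eqref{e1} satisfies the hypotheses of that theorem, rather than to build diagrams from scratch. The "if" direction is formal. Any disc diagram over \eqref{e1} shows that its contour label lies in the normal closure of $\mathcal{R}$ in $G(0)$: by van Kampen's lemma one reads off the contour label as a product of conjugates of the cell labels. Hence $W=1$ in $G$.

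For the "only if" direction I will follow Olshanskii's scheme. First, if $W=1$ in $G$, then $W$ is a consequence of finitely many relators. So $W=1$ already in some $G(i)$, and van Kampen's lemma gives some disc diagram of rank $\le i$ with contour label $W$. Second, I pass to a diagram that is \emph{reduced in Olshanskii's sense}: no two cells of the same rank $j$ whose contours are the same cyclic word, read in opposite directions, are joined by a path that runs along their contours in a compatible way (a "$j$-pair"). The tool is the standard cancellation argument. Whenever a $j$-pair is present, I cut out the two cells together with the connecting path and glue the resulting hole. Each such operation lowers the type $(\tau_1,\tau_2,\dots)$ of the diagram (the numbers of cells of rank $i, i-1,\dots$, ordered lexicographically), and the type is well ordered. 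So after finitely many steps the diagram is reduced, and its contour label is still graphically equal to $W$.

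The grading enters here. Each defining word of rank $i\ge 2$ is a power $A^{n_A}$ of a period $A$ that is simple in rank $i-1$. Also, distinct periods of the same rank are not conjugate to each other or to each other's inverses. This is exactly what makes the notion of a $j$-pair well defined. It ensures that two cells of the same rank whose boundaries are compatible really carry mirror labels, so they can be cancelled.

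The main obstacle is that Theorem 13.1 is proved in \cite{O} only for graded presentations satisfying the inductive smallness conditions of Chapter 8, and the exponents $n_A=mpq/(k,mp)$ here vary with the period. So the key step is to verify that every $n_A$ lies in the range $\ge q$ (when $k=0$) and $\le mpq$. Because $q>pm$ and $p$ is large, all exponents are large and uniformly comparable. This is precisely the setting of \S25 of \cite{O}, where exponents depend on the period, and there Lemmas 25.1–25.21 establish the required conditions. As observed before the lemma, the absence of relations of type (2) only removes cells, so those verifications carry over verbatim. With these conditions in place, Theorem 13.1 of \cite{O} applies and yields the lemma.
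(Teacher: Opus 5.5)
Your conclusion is right, and your route ends where the paper's does. The paper gives no argument of its own here; it simply quotes Olshanskii's Theorem 13.1. The proof of that theorem is the argument you sketch: van Kampen's lemma, then removal of $j$-pairs by induction on the lexicographically ordered type. Two parts of your write-up are off, however.

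\textbf{The ``main obstacle'' does not exist.} Theorem 13.1 is proved in \S13 of \cite{O} for an \emph{arbitrary} graded presentation. The type-reduction argument uses no smallness conditions and no bounds on the exponents $n_A$. It also uses neither the simplicity nor the pairwise non-conjugacy of the periods; the notion of a $j$-pair is defined for any graded presentation. That is why the paper can invoke the theorem with no verification. The conditions of \S25, secured by taking $p,q$ large via the least parameter principle, are needed only afterwards: for Lemma \ref{19.4} (reduced diagrams are $A$-maps) and for Lemma \ref{16.2}. Your exponent check is therefore beside the point.

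\textbf{Your description of a $j$-pair omits the condition that makes the reduction work.} The connecting path $t$ from $o_1\in\partial\pi_1$ to $o_2\in\partial\pi_2$ must have a label $\varphi(t)$ that is trivial in $G(j-1)$. (It suffices that it commutes there with the cell label $R$, e.g.\ is a power of the period $A$ when $R=A^n$.)
\begin{itemize}
\item Removing $\pi_1$, $t$ and $\pi_2$ leaves a hole with boundary label $R\,\varphi(t)\,R^{-1}\varphi(t)^{-1}$. This label is trivial in $G(j-1)$ precisely because of that condition.
\item The hole is then \emph{filled} by a diagram of rank at most $j-1$, not simply glued shut.
\item The filling can add many cells of lower rank. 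That is why the lexicographic type, and not the number of cells, is the right induction parameter.
\end{itemize}
If you only cancel pairs that can be glued directly (i.e.\ $\varphi(t)$ freely trivial), you get a weaker notion of ``reduced'' than the one for which Lemma \ref{19.4} holds.
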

\begin{lem}[see Lemma 19.4 \cite{O}]
	\label{19.4} A reduced diagram is an $A$-map.
\end{lem}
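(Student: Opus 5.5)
The plan is to obtain this statement from Olshanskii's monograph rather than to reprove it from scratch. We check that the presentation \eqref{e1} of $G$ satisfies every hypothesis used in the proof of Lemma~19.4 of \cite{O}, as adapted in Ch.~8, \S25 to presentations whose rank-one relators define a free product of cyclic groups. That proof is a simultaneous induction on rank together with the lemmas of \S\S15--18 (and their \S25 analogues). It uses only a few features of the graded presentation:
\begin{enumerate}
\item every relator of rank $i\ge2$ is a power $A^{n_A}$ of a period $A$ of rank $i$;
\item each period $A$ is simple in rank $i-1$, and distinct periods are not conjugate in rank $i-1$ up to inversion;
\item the exponents $n_A$ are bounded below by a sufficiently large constant $n$ relative to the small parameters $\alpha,\beta,\gamma,\ldots$ of the monograph;
\item the cells of rank $1$, with labels $a^{mp}$ and $b^q$, are treated as $0$-cells of the free product $\langle a\rangle*\langle b\rangle$.
\end{enumerate}

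First I verify items 1 and 2. They are immediate from the definition of $\mathcal X_i$ and $\mathcal P_i$ in Section~\ref{p2}. Next I verify item 3: since $n_A=mpq/(k,mp)\ge q$ and $q>pm$ is chosen sufficiently large, every exponent exceeds any prescribed bound $n$. The only difference from \S25 is that the exponents are not uniform. So I must check that the estimates of \S\S17--19 use only the lower bound $n_A\ge n$ and never the equality $n_A=n$. Examples are the bound on the number of periods of a relator covered by a contiguity subdiagram, and the inequality ensuring condition (A3) on the contiguity degree of a cell to a section.

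Then I run the inductive scheme. Assume that reduced diagrams of rank $<i$ are $A$-maps. Take a reduced diagram $\Delta$ of rank $i$. Conditions (A1) and (A2) follow from the corresponding lemmas, namely the analogues of Lemmas~19.1--19.3. These use the simplicity of periods and the facts about words over the free product proved in \S25. Condition (A3) is the key one: a cell $\Pi$ cannot have contiguity degree greater than $\varepsilon$ to a cell $\pi$ of lower or equal rank. It is proved by contradiction. A large contiguity yields a word that is conjugate in rank $i-1$ to a large power of a period while also being a subword of a relator of rank $i$. This contradicts either the reducedness of $\Delta$ (a pair of mutually inverse cells could then be removed) or the choice of $\mathcal X_i$ (two periods would be conjugate).

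The main obstacle is the bookkeeping in (A3) for cells of different exponents. I would first reuse Olshanskii's argument via Lemma~18.8 and Theorem~16.2 on contiguity subdiagrams, replacing $n$ by $\min(n_A,n_B)$ throughout, and confirm that all inequalities still hold. Since relations of the second type of \S25 are absent, no additional case arises. The statement then holds verbatim for $G$.
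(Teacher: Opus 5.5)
Your strategy is the paper's. The paper gives no argument for this lemma beyond noting that $G$ is Olshanskii's $G(\infty)$ from Ch.~8, \S25 with the second-type relations removed, so the \S\S25--26 analogues of Lemma~19.4 carry over. Spelling out the hypotheses is a reasonable way to make this precise, but two items of your checklist are wrong, and as written they would verify a different statement.

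First, item~4 does not describe the presentation \eqref{e1}. There $G(0)=F_2$, and $a^{mp}$, $b^q$ make up $\mathcal P_1$. So cells labelled by them are genuine cells of rank $1$, subject to reducedness (no $1$-pairs) and to (A1)--(A3). The paper depends on this: in the proof of Lemma~\ref{rw}, the cell produced by Lemma~\ref{16.2} may be a rank-one cell labelled $a^{\pm mp}$ (the case $n=mp$, $A=a$). Reclassifying these cells as $0$-cells of $\langle a\rangle*\langle b\rangle$ changes both the class of reduced diagrams and the meaning of ``$A$-map''. You would then need a free-product version of the theory, not \S\S18--19 or \S25. It also hides a hypothesis. (A1) requires $|\partial\Pi|\ge n\,r(\Pi)$, so a cell labelled $a^{mp}$ needs $mp\ge n$, i.e.\ $p$ itself must be large. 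Your item~3 only gives $n_A\ge q$ for periods of rank $\ge2$ and says nothing about $n_a=mp<q$. The required bound has to come from the standing assumption that $p$ is sufficiently large.

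Second, your (A3) is not the condition of the monograph. There (A3) states: if $\Gamma$ is a contiguity submap of a cell $\pi$ to a cell $\Pi$ with $(\pi,\Gamma,\Pi)\ge\varepsilon$, then $|\Gamma\wedge\Pi|<(1+\gamma)r(\Pi)$. Your version covers only the case where the target cell has rank at most that of the first cell. It omits the case $r(\pi)<r(\Pi)$, where contiguity of degree $\ge\varepsilon$ is permitted and (A3) instead bounds the contact arc on $\Pi$ by about one period of $\Pi$. Checking your version alone does not show that reduced diagrams are $A$-maps.

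The contradiction mechanism you describe is the right one: a long overlap forces the periods to be conjugate in lower rank, contradicting simplicity, the choice of $\mathcal X_i$, or reducedness. But it must be applied to an arc $\Gamma\wedge\Pi$ of length $\ge(1+\gamma)r(\Pi)$.

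Finally, period-dependent exponents are not the new feature relative to \S25. The construction the paper reproduces already has exponents $n_A$ depending on $A$. The only change the paper invokes is the removal of the second-type relations, which can only eliminate cases.
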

\begin{lem}[see Theorem 16.2 \cite{O}]
	\label{16.2} If $\Delta$ is an $A$-map of nonzero rank, then in $\Delta$ there exists a cell $\Pi$ and a submap $\Gamma$ of its contiguity to one of the sections $q$ of the contour such that $r(\Gamma)=0$ and
	$(\Pi, \Gamma, q)\ge\varepsilon$.
\end{lem}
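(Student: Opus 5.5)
The plan is not to reprove this statement from scratch. It is a purely geometric statement about $A$-maps, and Olshanskii's Theorem 16.2 in \cite{O} is proved for arbitrary $A$-maps. An $A$-map is defined by local conditions on its cells and $0$-bonds, stated in terms of the small parameters $\alpha,\beta,\gamma,\varepsilon,\ldots$ of \cite{O}. Neither the definition nor the proof of Theorem 16.2 uses which particular words label the boundaries of the cells. So the argument has two steps:
\begin{enumerate}
\item check that the diagrams we will apply the statement to are $A$-maps for the same choice of parameters as in \cite{O};
\item recall the structure of Olshanskii's proof, to make sure nothing in it depends on the exponents $n_A$ beyond a lower bound.
\end{enumerate}

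For the first step, by Lemma \ref{19.4} every reduced diagram over \eqref{e1} is an $A$-map. This is where the hypotheses on the presentation enter, and they enter only through a few facts:
\begin{itemize}
\item all relators of rank $i\ge2$ are powers $A^{n_A}$ of periods of rank $i$;
\item every exponent satisfies $n_A\ge q$, and $q$ is larger than the parameter $n$ of \cite{O}, because $n_A=\frac{mpq}{(k,mp)}\ge q$;
\item periods of the same rank are pairwise non-conjugate, including up to inversion.
\end{itemize}
Each of these holds by construction in Section \ref{p2}. The lack of a uniform exponent only helps, since every estimate in \cite{O} uses $n_A$ solely as a lower bound for the number of periods along a cell contour. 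I would go through the definition of an $A$-map (conditions A1--A3 of \cite{O}) and confirm that each uses only this lower bound.

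For the second step, I would follow Olshanskii's argument. Take an $A$-map $\Delta$ of nonzero rank and assume, for a contradiction, that no cell $\Pi$ has a rank-$0$ contiguity submap $\Gamma$ to a contour section $q$ with $(\Pi,\Gamma,q)\ge\varepsilon$. Using Theorem 16.1 of \cite{O} (a cell of an $A$-map has a large total contiguity degree to the contour), together with the estimates of \S15 on contiguity submaps, one finds a cell whose combined contiguity to the boundary is close to $1$. Then the bounded number of contour sections, and the additivity of contiguity degrees up to a small error from the $\beta$-estimates, force one single contiguity degree to be at least $\varepsilon$. One also has to ensure that the corresponding contiguity submap has rank $0$. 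For this, pass to an innermost such cell: if $\Gamma$ contained cells of positive rank, the induction on rank (i.e.\ on the number of cells) inside $\Gamma$, applied to the smaller $A$-map $\Gamma$, would produce a contradiction with the choice of $\Pi$.

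I expect the main obstacle to be the bookkeeping in the first step. One must confirm that the passage in \cite{O} from Olshanskii's group $G(\infty)$ to our $G$, which drops the relations of type $(2)$ and replaces the constant exponent by the variable $n_A\ge q$, does not affect conditions A1--A3. In particular, this must hold for the rank-$1$ relators $a^{mp}$ and $b^q$, whose lengths are not large relative to their rank. I would handle these exactly as Olshanskii handles rank-$1$ cells in \S25, treating them as $0$-cells in the sense of contiguity. Once this verification is done, the statement is a verbatim instance of Theorem 16.2 of \cite{O}.
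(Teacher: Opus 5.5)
Your core plan is what the paper does: the paper gives no proof of its own but quotes Olshanskii's Theorem 16.2, which concerns abstract $A$-maps, so it applies verbatim and nothing about the presentation \eqref{e1} needs checking here; that verification is the content of Lemma \ref{19.4}, so your first step and your sketch of Olshanskii's argument are not needed for this lemma. One caution about that side bookkeeping: the smallest exponent is $n_a=mp<q$, not $q$, and the rank-1 relators $a^{mp}$, $b^q$ are not short relative to their rank, so they are ordinary rank-1 cells; regrading them as 0-cells is unnecessary and would change the presentation \eqref{e1}, where $\mathcal{R}_0=\emptyset$.
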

\section{Some properties of the group $G$}\label{p5}
\begin{lem}\label{l1}
	The order of the element $a$ is $mp$ in $G$, and the order of any other period $A=a^kb^sc\in\mathcal{X}_i$ is $\frac{mpq}{(k,mp)}$. In particular, the order of each period of type $(b)$ is $q$.
\end{lem}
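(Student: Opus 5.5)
The plan is to read Lemma \ref{l1} off part (1) of Lemma \ref{26.4}, which says that every period $A$ of every rank has order exactly $n_A$ in $G$. The only things to check are that the periods and exponents $n_A$ in our construction are as the lemma claims. In particular, $a$ and $b$ must be periods of rank~1 with $n_a=mp$ and $n_b=q$, and every $A\in\mathcal{X}_i$ with $i\ge2$ must carry the exponent $n_A=\frac{mpq}{(k,mp)}$. We also need that our presentation \eqref{e1} satisfies the hypotheses under which Olshanskii's Theorem 26.4 applies.

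First, I will note that $\mathcal{P}_1=\{a^{mp},b^q\}$, so $a$ is a period of rank $1$ with $n_a=mp$. Lemma \ref{26.4}(1) then gives $|a|=mp$ in $G$. This part needs a lower bound, not just the upper bound coming from the relation: we must rule out $a^d=1$ for a proper divisor $d$ of $mp$. That is exactly what Theorem 26.4 provides, via the graded diagram machinery (Lemmas \ref{13.1}, \ref{19.4}, \ref{16.2}). For a reduced diagram with contour $a^d$, Lemma \ref{16.2} would produce a cell with a large contiguity to a subword of $a^d$. This is impossible for cells of rank $\ge2$, since periods of rank $\ge2$ are simple and not conjugate to powers of $a$. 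The rank-1 cells have boundary $a^{mp}$ or $b^q$, so they cannot fit along $a^d$ with $d<mp$ either. The same argument gives $|b|=q$.

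Second, for $A=a^kb^sc\in\mathcal{X}_i$ with $i\ge2$, the normalization $0\le k<mp$ and $0\le s<q$ is well defined. The relation $A^{n_A}$ with $n_A=\frac{mpq}{(k,mp)}$ was put in $\mathcal{P}_i$, so Lemma \ref{26.4}(1) gives $|A|=n_A$. I also want to remark that this value is consistent with the abelianization. The image of $A$ in $\mathbb{Z}_{mp}\times\mathbb{Z}_q$ is $(k,s)$, whose order divides $\mathrm{lcm}\bigl(\frac{mp}{(k,mp)},q\bigr)$, and this divides $n_A$. So no relation of the form $A^d=1$ with $d\nmid n_A$ is forced by the abelian quotient.

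Finally, if $A$ is of type $(b)$, then $k=0$ and $(0,mp)=mp$, so $n_A=q$ and $|A|=q$. The main obstacle is the justification that Olshanskii's Theorem 26.4 really transfers to our setting, which differs in two ways. The exponents $n_A$ vary with $A$ instead of being a single $n$, though they are all large (at least $q$ or at least $p$). And the relations of type (2) of \S25 are absent. I expect the argument to go through because the exponents are bounded below by the large parameter $p$, which plays the role of $n$ in Olshanskii's estimates. Also, removing the type (2) relations only removes cells, and this preserves the hypotheses of the graded-presentation conditions (R1)–(R3) on which \S25–26 rely. If needed, I would recheck condition (R2) (periods of rank $i$ are not conjugate to powers of shorter words) and the lower bound $n_A\ge p$ directly, and then cite the paper's earlier remark that Lemmas 25.1–26.5 remain valid for $G$.
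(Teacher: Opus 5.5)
Your proposal is correct and coincides with the paper's proof, which likewise just applies item~1 of Lemma~\ref{26.4} to the rank-1 periods $a,b$ (with $n_a=mp$, $n_b=q$) and to each $A=a^kb^sc\in\mathcal{X}_i$, $i\ge2$ (with $n_A=\frac{mpq}{(k,mp)}$, which equals $q$ when $k=0$). Your diagrammatic aside is not needed, and its rank-1 step is not valid as stated, since a cell labeled $a^{mp}$ can have contiguity degree $\ge\varepsilon$ to a contour $a^d$ with $d<mp$. For $a$ and $b$ the lower bounds follow at once from the homomorphisms $G\to\mathbb{Z}_{mp}$ ($a\mapsto1$, $b\mapsto0$) and $G\to\mathbb{Z}_q$ ($a\mapsto0$, $b\mapsto1$), while for periods of rank $\ge2$ the citation is genuinely what carries the argument.
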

\begin{proof}
	By Lemma \ref{26.4}, item 1), if $A$ is a period of some rank $i$, i.e., $A\in\mathcal{X}_i$ and $A^{n_A}\in \mathcal{P}_i$, then the period $A$ has order $n_A$ in $G$. In our case, $a^{mp}, b^q\in \mathcal{P}_1$, while for all other periods of the form $A=a^kb^sc$ we have $A^\frac{mpq}{(k,mp)}\in \mathcal{P}_i$, $i\ge2$. Hence $a$ has order $mp$ in $G$, and all other periods have order $\frac{mpq}{(k,mp)}$ in $G$. In particular, all periods $A=b^sc$ of type $(b)$ have order $\frac{mpq}{(0,mp)}=q$.
\end{proof}
\begin{lem}\label{l2}
	$\langle a\rangle\cap\langle b\rangle^G={1},$ where $\langle b\rangle^G$ is the normal closure of the element $b$ in $G$.
\end{lem}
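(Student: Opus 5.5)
The plan is to use a homomorphism onto a cyclic group that kills $b$ but keeps $a$ of full order $mp$. Let $C_{mp}=\langle c\mid c^{mp}=1\rangle$ and define $\varphi:G(0)\to C_{mp}$ by $a\mapsto c$, $b\mapsto 1$. We first check that $\varphi$ respects every defining relation in $\mathcal{R}$, so that it induces a homomorphism $\bar\varphi:G\to C_{mp}$. Since $\langle b\rangle^G\le\ker\bar\varphi$, any element of $\langle a\rangle\cap\langle b\rangle^G$ has the form $a^t$ with $c^t=1$. This forces $mp\mid t$, and then $a^t=1$ by Lemma \ref{l1}, because $a$ has order $mp$ in $G$.

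The check on relations goes as follows.
\begin{itemize}
\item For rank $1$: $\varphi(a^{mp})=c^{mp}=1$ and $\varphi(b^q)=1$.
\item For a relation of rank $i\ge2$: write the period as $A=a^kb^sc'$, where $c'$ is a commutator word. Then $\varphi(c')=1$, since the target is abelian, and $\varphi(b)=1$. Hence $\varphi(A^{n_A})=c^{kn_A}$, where $n_A=\frac{mpq}{(k,mp)}$.
\item The exponent is $kn_A=\frac{k}{(k,mp)}\cdot mpq$, which is divisible by $mp$. So $\varphi(A^{n_A})=1$.
\end{itemize}
One small point needs care: the normalization $0\le k<mp$ in the construction is only a choice of representative. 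However, $\varphi$ depends only on the exponent sum of $a$ modulo $mp$, and the relator $A^{n_A}$ is a fixed word whose image we have just computed. So the conclusion does not depend on that choice.

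The only nontrivial ingredient is Lemma \ref{l1}, which guarantees that $a$ really has order exactly $mp$ in $G$ and not a proper divisor of it. Without this, the argument would only show that $a^t\in\langle b\rangle^G$ implies $mp\mid t$, not that $a^t=1$. Since Lemma \ref{l1} is already available, no real obstacle remains. The main thing to verify is the divisibility $mp\mid kn_A$, which holds by the choice of exponents $n_A$.
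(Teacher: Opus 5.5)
Your proof is correct and is essentially the paper's argument: the paper adds the relation $b=1$ and checks that every defining relator becomes a consequence of $a^{mp}=1$, so that $G/\langle b\rangle^G\cong\mathbb{Z}_{mp}$, which is exactly your verification that $\varphi$ kills all relators. One correction to your commentary: Lemma~\ref{l1} is not needed at the last step, because $mp\mid t$ already gives $a^t=(a^{mp})^{t/mp}=1$ from the defining relator $a^{mp}\in\mathcal{P}_1$ (and your $\bar\varphi$ in fact shows on its own that $a$ has order exactly $mp$).
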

\begin{proof}
	Add to the defining relations of the group $G$ a new relation $b=1$. We obtain a new group $\Gamma$: $$\Gamma=\langle a,b\mid b=1; R = 1; R \in \mathcal{R} = \bigcup_{i=0}^\infty \mathcal{R}_i \rangle.$$
	By Lemma \ref{l1}, any relation $A^q=1$ of the group $\Gamma$, where $A=b^sc$ is a defining word of type $(b)$, is a consequence of the relation $b=1$, since $c$ is a commutator word in the generators $a,b$. And every other relation of the form $(a^kb^sc)^\frac{mpq}{(k,mp)}=1$ is a consequence of the pair of relations $a^{mp}=1$ and $b=1$. Thus, the group $\Gamma$ can be presented as follows: $\Gamma=\langle a,b\mid a^{mp}=1; b=1 \rangle=\langle a\mid a^{mp}=1\rangle$, and hence $\Gamma$ is a cyclic group of order $mp$. If it were the case that $\langle a\rangle\cap\langle b\rangle^G\neq{1}$, we would obtain $|\Gamma|\simeq |G/\langle b\rangle^G|<mp$.
\end{proof}
\begin{lem}\label{l3}
	The normal subgroup $H = \langle b\rangle^G$ of the group $G$ is a periodic group of period $q$.
\end{lem}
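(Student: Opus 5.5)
Fix a nontrivial element $x\in H=\langle b\rangle^G$; we must show $x^q=1$. By Lemma \ref{26.4}, item 2), $x$ is conjugate in $G$ to a power $A^t$ of a period $A$ of some rank $j$. Since $H$ is normal, $A^t\in H$, and conjugation does not change orders. So it suffices to show the following: whenever a power $A^t$ of a period lies in $H$, we have $(A^t)^q=1$.

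The key tool is the quotient map $\varphi\colon G\to G/H$. By the proof of Lemma \ref{l2}, $G/H\cong\Gamma$ is cyclic of order $mp$, generated by the image of $a$, and $\varphi(b)=1$. Hence for $A=a^kb^sc$ we get $\varphi(A)=\varphi(a)^k$, because $c$ is a commutator word and its image in the abelian group $\Gamma$ is trivial. The case $A=a$ is ruled out by Lemma \ref{l2}: $\langle a\rangle\cap H=1$, so $a^t\in H$ forces $a^t=1$. The case $A=b$ is immediate, since $b^q=1$.

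In general, $\varphi(A)$ has order $mp/(k,mp)$ in $\Gamma$. So $A^t\in H$ exactly when $\frac{mp}{(k,mp)}\mid t$. By Lemma \ref{l1}, $A$ has order $n_A=\frac{mpq}{(k,mp)}$, so $A^{\frac{mp}{(k,mp)}}$ has order $q$. Therefore $A^t$ is a power of an element of order $q$, and $(A^t)^q=1$. For type $(b)$ periods this is just $k=0$, where $A$ itself has order $q$. This establishes that $H$ has exponent dividing $q$.

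Two points need checking. The first is that Lemma \ref{26.4}, item 2) applies, i.e. the sets $\mathcal{X}_i$ are maximal; this holds by the construction. The second is that the representation $A=a^kb^sc$ used to define $n_A$ is consistent with the image in $\Gamma$. This is where care is needed: the reduction to $0\le k<mp$ is made modulo the relation $a^{mp}=1$, which is also a relation in $\Gamma$, so the computation of $\varphi(A)$ is unaffected. I expect no deeper obstacle, since all the diagram-theoretic work is already contained in Lemma \ref{l1}.
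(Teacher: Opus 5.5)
Your proposal is correct and follows the paper's proof. You reduce via Lemma \ref{26.4}(2) to a power $A^t$ of a period lying in $\langle b\rangle^G$. You then use $A\equiv a^k$ modulo $\langle b\rangle^G$ to get $\frac{mp}{(k,mp)}\mid t$, and finish with the order $\frac{mpq}{(k,mp)}$ of $A$ from Lemma \ref{l1}. The only difference is cosmetic: you phrase the middle step through the quotient $G/\langle b\rangle^G\cong\mathbb{Z}_{mp}$ computed in the proof of Lemma \ref{l2}, whereas the paper uses $\langle a\rangle\cap\langle b\rangle^G=1$ together with the fact that $a$ has order $mp$ in $G$.
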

\begin{proof}
	Since each of the sets $\mathcal{X}_i$ is maximal, by item 2) of Lemma \ref{26.4} every nontrivial element $x\in G$ is conjugate in this group to a power of a period of some rank $j$. Let the element $x\neq1$ of the normal subgroup $\langle b\rangle^G$ be conjugate to a power of the period $A$ of rank $j$: $x=UA^tU^{-1}$. Consequently, $A^t\in\langle b\rangle^G$.

	Let $j=1$, that is, let $A$ be a period of rank 1. Since $A^t\in\langle b\rangle^G$ and $\langle a\rangle\cap\langle b\rangle^G=\{1\}$ by Lemma \ref{l2}, we have $A\neq a$. Hence $A=b$.

	Let $j\ge2$ and let the word $A$ have the form $A=a^kb^sc$ in the free group $F_2$, where $c$ is a commutator word and $a^kb^sc$ is a word simple in rank $j-1\ge1$. From the obvious relations $b^s\in\langle b\rangle^G$ and $c\in\langle b\rangle^G$ it follows that $a^{kt}\in\langle b\rangle^G.$ By Lemma \ref{l2} we obtain $a^{kt}=1$ in $G$, and hence $kt\vdots mp$ by Lemma \ref{l1}. Let $(k,mp)=l$. Then $t=\frac{mp}{l}t_1$ for some natural number $t_1$. By Lemma \ref{l1}, the word $A=a^kb^sc$ has order $\frac{mpq}{l}$ in $G$. Therefore $A^t=A^{\frac{mp}{l}t_1}\neq1$ has prime order $q$ in $G$. Hence the element $x=UA^{t}U^{-1}$ also has order $q$.
\end{proof}
\section{\label{p3} On regular decompositions of words in free products}
The purpose of this section is to prove Lemma \ref{cprsl}, which we shall need below, in Section \ref{p4}, for constructing a free periodic subgroup of rank $m$ in the normal subgroup $\langle b\rangle^G$ of the group $G$ \ref{e1}.
Any reduced word $W=W(x_1,x_2,\ldots, x_m)$ of the free group $\langle x_1,x_2,\ldots, x_m\rangle$ with free generators $x_1,x_2,\ldots, x_m$ is uniquely represented in the form
\begin{equation}
	\label{cf}x_{i_1}^{u_1}x_{i_2}^{u_2}\cdots x_{i_k}^{u_k}
\end{equation}
for some $k\ge1$, where $x_{i_1},\dots, x_{i_k}\in\{x_1,x_2,\ldots, x_m\}$, $u_1,\dots,u_k$ are integers and $x_{i_j}\neq x_{i_{j+1}}$, $j=1,\dots,k-1$.
Now consider the absolutely free group $G(0)=\langle a,b\rangle$ with two generators and denote the word $b^{-1}a^{-1}b^2ab^{-1}$ by $y$: $y=b^{-1}a^{-1}b^2ab^{-1}$. Next, we inductively define the following words:
\begin{equation}
	\label{xyz}
	y_1=y=b^{-1}a^{-1}b^2ab^{-1},\quad
	y_2=a^{p}y_1a^{-p}, \ldots,\quad y_m=a^{p}y_{m-1}a^{-p},
\end{equation} where $p\ge3$ is a fixed prime number.
Thus,
\begin{equation}
	\label{xyzy}
	y_1=y,\quad
	y_2=a^{p}ya^{-p}, \ldots,\quad y_m=a^{(m-1)p}ya^{-(m-1)p}.
\end{equation}
The elements $y_1,y_2,\ldots, y_m$ \eqref{xyzy} freely generate a free subgroup $H_1=\langle y_1,y_2,\ldots, y_m\rangle$ of rank $m$ in the free group $\langle a,b\rangle$, since the sequence $y_1,y_2,\ldots, y_m$ is Nielsen-reduced. Hence the map
\begin{equation}
	\label{perm}\left\{ \begin{array}{rcl}
		x_1&\mapsto& y_1 \\ x_2&\mapsto& y_2 \\ \vdots & &\vdots\\ x_m&\mapsto&  y_m\end{array}\right.
\end{equation}
uniquely extends to an embedding
\begin{equation}
	\label{alpha}\alpha:\langle x_1,x_2,\ldots, x_m\rangle\to \langle a,b\rangle
\end{equation}
of the absolutely free group $\langle x_1,x_2,\ldots, x_m\rangle$ of rank $m$ into the free group $\langle a,b\rangle$ of rank 2, and $Im(\alpha)=H_1$.
If the word $W$ is represented in the form \eqref{cf}, then the word $\alpha(W)$ is uniquely represented in the form
\begin{equation}
	\label{yf}y_{i_1}^{u_1}y_{i_2}^{u_2}\cdots y_{i_k}^{u_k},
\end{equation}
where $y_{i_1},\dots, y_{i_k}\in\{y_1,y_2,\ldots, y_m\}$ and $y_{i_j}\neq y_{i_{j+1}}$, $j=1,\dots,k-1$. Such a representation \eqref{yf} of the word $\alpha(W(x_1,x_2,\ldots,x_m))$ will be denoted by $W(y_1,y_2,\ldots, y_m)$ and called its \textbf{canonical form}.
After performing cancellations and merges in the absolutely free group $\langle a,b\rangle$ at the junctions $y_{i_j} y_{i_{j+1}}$ of subwords of the form $a^{\pm kp}$, the word \eqref{yf} finally acquires the form
\begin{equation}
	\label{bf} a^{s_1p}\cdot y^{u_1}\cdot a^{s_2p}\cdot y^{u_2}\cdots a^{s_kp}\cdot y^{u_k}\cdot a^{s_{k+1}p},
\end{equation}
where $s_1, s_2, \dots, s_{k+1}$ are integers, and $s_1,s_{k+1}$ may equal 0.
We denote by $\sigma_a(U)$ the sum of the exponents of the letter $a$ in the word $U$.
\begin{lem} \label{sigma}
	\begin{enumerate}
		\item $\sigma_a(\alpha(W))=0$.
		\item $|\sigma_a(U')|\le (m-1)p+1$ for every subword $U'$ of the reduced form of the word $\alpha(W)$.
	\end{enumerate}
\end{lem}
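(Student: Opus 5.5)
The plan is to compute the prefix $a$-exponent sums of the reduced form \eqref{bf} of $\alpha(W)$ explicitly and show they all lie in the interval $[-1,(m-1)p]$. Both claims then follow. The $a$-exponent sum of any subword of a word is the difference of two prefix sums, so the bound $(m-1)p+1$ is just the length of this interval, and part (1) is the statement that the final prefix sum is $0$.

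First I identify the reduced form concretely. By \eqref{xyzy}, $y_i^{u}=a^{(i-1)p}y^{u}a^{-(i-1)p}$. Both $y=b^{-1}a^{-1}b^2ab^{-1}$ and $y^{-1}=ba^{-1}b^{-2}ab$ begin and end with a letter $b^{\pm1}$, and $y^u$ is reduced for every $u\neq0$: the junction of $y$ with $y$ is $b^{-1}b^{-1}$, and that of $y^{-1}$ with $y^{-1}$ is $bb$. Hence, in the canonical form \eqref{yf}, the only cancellations occur in the $a$-powers between consecutive blocks, where $a^{-(i_j-1)p}a^{(i_{j+1}-1)p}=a^{(i_{j+1}-i_j)p}$. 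These never reach into the $y$-blocks, since each block is flanked by $b$-letters. Consequently the reduced form of $\alpha(W)$ is exactly \eqref{bf}, with $s_1=i_1-1$, $s_{j+1}=i_{j+1}-i_j$ for $1\le j<k$, and $s_{k+1}=-(i_k-1)$, where any zero $a$-powers are simply deleted.

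Next I compute the prefix sums. Reading $y$ letter by letter, the prefix $a$-sums are $0,0,-1,-1,-1,0,0$. Reading $y^{-1}=ba^{-1}b^{-2}ab$, they are $0,-1,-1,-1,0,0$. So every prefix of $y^{\pm1}$, and hence of $y^u$, has $a$-sum in $\{-1,0\}$, and each full copy of $y^{\pm1}$ has $a$-sum $0$. Therefore at the start (and end) of the block $y^{u_j}$ in \eqref{bf}, the prefix sum of the whole word equals $s_1+\dots+s_j$ times $p$, which is $(i_j-1)p\in[0,(m-1)p]$. Inside that block the prefix sum lies in $\{(i_j-1)p-1,(i_j-1)p\}\subseteq[-1,(m-1)p]$. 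Inside an $a$-block $a^{s_{j+1}p}$ the prefix sum moves monotonically from $(i_j-1)p$ to $(i_{j+1}-1)p$, and likewise in the initial block (from $0$ to $(i_1-1)p$) and the final block (from $(i_k-1)p$ to $0$). So these values also stay in $[0,(m-1)p]$.

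Finally, the total sum is $\sum_j s_jp=0$ because the $s_j$ telescope; this is part (1). For part (2), if $U'$ occupies positions $r+1,\dots,t$ of the reduced word, then $\sigma_a(U')=P_t-P_r$ with $P_r,P_t\in[-1,(m-1)p]$. Hence $|\sigma_a(U')|\le(m-1)p+1$. The only delicate step is the sign check above: one must confirm that the prefix sums of both $y$ and $y^{-1}$ dip only to $-1$ and never rise to $+1$, which is exactly what keeps the bound at $(m-1)p+1$ rather than $(m-1)p+2$.
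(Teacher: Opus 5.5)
Your argument is correct and takes essentially the paper's approach, a direct inspection of the $a$-exponent sums in the block form \eqref{bf}, carried out more carefully. The paper only exhibits where the extreme value $(m-1)p+1$ occurs (e.g.\ the subword $ab^{-1}a^{(m-1)p}$ of $yy_m$). Your observation that every prefix sum of \eqref{bf} lies in $[-1,(m-1)p]$ is what actually proves the bound cannot be exceeded.
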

\begin{proof}

	1. This follows directly from the definition of the words $y_1,y_2,\ldots, y_m$.
	2. The sum of the exponents of the letter $a$ attains the value $\pm((m-1)p+1)$ only in subwords of products of the form $(y^{\pm1}y_m^{\pm1})^{\pm1}$, $(y_m^{\pm1}y^{\pm1})^{\pm1}$. For example,
	\[
	yy_m=b^{-1}a^{-1}b^2\underline{ab^{-1}a^{(m-1)p}}b^{-1}a^{-1}b^2ab^{-1}a^{-(m-1)p}.
	\]
\end{proof}

In parallel, let us now consider the free product
\begin{equation}
	\label{g1}G_1=\mathbb{Z}_{mp}\ast\mathbb{Z},
\end{equation}
of the cyclic group $\mathbb{Z}_{mp}=\langle a \mid a^{mp}=1 \rangle$ of order $mp$ with generator $a$ with the infinite cyclic group $\mathbb{Z}=\langle b \rangle$ with generator $b$. In this group $G_1$ we consider the same words
\begin{equation}
	\label{xyzy*}
	y_1=y=b^{-1}a^{-1}b^2ab^{-1},\quad
	y_2=a^{p}ya^{-p}, \ldots,\quad y_m=a^{(m-1)p}ya^{-(m-1)p}.
\end{equation}
For any word $W=W(x_1,x_2,\ldots, x_m)$, the word $\alpha(W)$ also represents an element of the free product $G_1=\mathbb{Z}_{mp}\ast\mathbb{Z}$.
Any word from $G_1$ is reduced to a normal form in which syllables from $\mathbb{Z}_{mp}$ alternate with syllables from $\mathbb{Z}$. In the case of a reduced word $W$, when multiplying adjacent words $y_1^{\pm1}, y_2^{\pm1}, \ldots, y_m^{\pm1}$ in the word $\alpha(W)$, the middle $(a^{-1}b^2a)^{\pm1}$ of each of them is never canceled. Therefore, the length of the normal form of the word $\alpha(W)$ is no less than $3|W(x_1,x_2,\ldots, x_m)|$, where $|W(x_1,x_2,\ldots, x_m)|$ is the length of the word $W(x_1,x_2,\ldots, x_m)$ in the free group $\langle x_1,x_2,\ldots, x_m\rangle$. Consequently, any nontrivial word $W$ passes to a word $\alpha(W)$ representing a nontrivial element of the free product $\mathbb{Z}_{mp} * \mathbb{Z}$. Thus, the composition of homomorphisms
\begin{equation}
	\label{alphabeta} \beta\circ\alpha: \langle x_1,x_2,\ldots, x_m\rangle \to G_1
\end{equation}
is an embedding, where $\beta$ is the natural epimorphism \begin{equation}
	\label{beta}\beta:\langle a,b\rangle\to G_1
\end{equation}
taking each word of the free group $\langle a,b\rangle$ to the element of the group $G_1$ represented by that same word. We denote $$H_2=\beta(H_1)=\beta(\alpha(\langle x_1,x_2,\ldots, x_m\rangle)).$$
Note that for an integer $u$:
\begin{align*}
	y^u &= [(b^{-1}a^{-1}b^2ab^{-1})(b^{-1}a^{-1}b^2ab^{-1})\cdots (b^{-1}a^{-1}b^2ab^{-1})]^{\operatorname{sgn}(u)}\\
	&= [b^{-1}(a^{-1}b^2ab^{-2})(a^{-1}b^2ab^{-2})\cdots (a^{-1}b^2ab^{-2})(a^{-1}b^2ab^{-1})]^{\operatorname{sgn}(u)}\\
	&= [b^{-1}(a^{-1}b^2ab^{-2})^{u-1}a^{-1}b^2ab^{-1}]^{\operatorname{sgn}(u)}.
\end{align*}
Hence, any word $y^u$ in the free product $G_1$ has normal form $[b^{-1}(a^{-1}b^2ab^{-2})^{u-1}a^{-1}b^2ab^{-1}]^{\operatorname{sgn}(u)}.$ In what follows, by the notation $y^u$ we shall mean its normal form:
\begin{equation}
	\label{nfx}y^u=[b^{-1}(a^{-1}b^2ab^{-2})^{u-1}a^{-1}b^2ab^{-1}]^{\operatorname{sgn}(u)}.
\end{equation}
By the uniqueness of the normal form of an element in a free product, $y^u=y^v$ in $G_1$ implies $u=v$.
\begin{lem}
	\label{ast} The subgroup in $G_1$ generated by the elements $a^p,y$ is isomorphic to the free product $\langle a^p\rangle\ast\langle y\rangle$, i.e. $\langle a^p, y\rangle=\langle a^p\rangle\ast\langle y\rangle$.
\end{lem}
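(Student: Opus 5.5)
We have to show that in $G_1=\mathbb{Z}_{mp}\ast\mathbb{Z}$ the element $a^p$ has order $m$, that $y$ has infinite order, and that no nontrivial alternating product of powers of $a^p$ and $y$ equals $1$. The order statements are immediate: $a^p$ has order $m$ in $\mathbb{Z}_{mp}$, and by \eqref{nfx} $y^u\neq1$ for $u\neq0$. It therefore suffices to take an alternating product
$$W=a^{s_1p}\,y^{u_1}\,a^{s_2p}\,y^{u_2}\cdots a^{s_kp}\,y^{u_k}\,a^{s_{k+1}p},$$
with $k\ge1$, $u_j\neq0$, $s_j\not\equiv0\pmod m$ for $2\le j\le k$, and $s_1,s_{k+1}$ arbitrary, and to show that its free-product normal form is nonempty. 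This also covers words consisting of a single nontrivial power of $a^p$, since such a word is already a nontrivial syllable of $\mathbb{Z}_{mp}$.

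The plan is to write each $y^{u_j}$ in its normal form \eqref{nfx}. For $u>0$ this normal form begins with $b^{-1}$ and ends with $b^{-1}$. For $u<0$ the inverse is $[b\,a^{-1}b^{-2}a\,(b^2a^{-1}b^{-2}a)^{|u|-1}b]$, which begins and ends with $b$. In both cases the first and last syllables lie in $\langle b\rangle$, and every interior $\mathbb{Z}_{mp}$-syllable is $a^{\pm1}$.

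At each junction $y^{u_j}\,a^{s_{j+1}p}\,y^{u_{j+1}}$ the middle syllable $a^{s_{j+1}p}$ is nontrivial in $\mathbb{Z}_{mp}$, because $m\nmid s_{j+1}$. It sits between two $\mathbb{Z}$-syllables, namely the last letter $b^{\pm1}$ of $y^{u_j}$ and the first letter $b^{\pm1}$ of $y^{u_{j+1}}$. Consequently no cancellation or merging occurs there, and the concatenation of the normal forms is already reduced.

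The outer factors $a^{s_1p}$ and $a^{s_{k+1}p}$ are either trivial, in which case we drop them, or nontrivial $\mathbb{Z}_{mp}$-syllables adjacent to $b$-syllables. Either way the result is a reduced alternating word of length at least $5k\ge5$, so $W\ne1$ in $G_1$ by uniqueness of normal forms in a free product.

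There is essentially no hard step here. The only point requiring care is to verify that the first and last syllables of $y^{u}$ lie in $\langle b\rangle$ for both signs of $u$, so that the $a$-syllables $a^{s_jp}$ can never merge with the internal syllables $a^{\pm1}$ of the $y$-powers.
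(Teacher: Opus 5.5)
Your proof is correct and follows essentially the same route as the paper. Both reduce to showing that an alternating product of nontrivial powers of $a^p$ and $y$ has a nonempty normal form in $G_1$, because each $y^{u}$ begins and ends with a $b$-syllable, so the separating $a^{s_jp}$ cannot merge; your explicit check of the normal form of $y^{u}$ for $u<0$ is a detail the paper leaves implicit.
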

\begin{proof} Any nontrivial element $U$ of the group $\langle a^p\rangle\ast\langle y\rangle$ can be represented in one of the following forms: $U=a^{t_1p}$, where $t_1\not \equiv0\pmod{m}$, or $U=y^{v_1}$, where $v_1\neq0$,	or $U=a^{t_1p}\cdot y^{v_1}\cdot a^{t_2p}\cdot y^{v_2}\cdots a^{t_lp}\cdot y^{v_l}\cdot a^{t_{l+1}p}$, where $t_i\not\equiv 0\pmod{m}$ for $i=2,\ldots, l$ and $v_i\neq0$ for $i=1,\ldots, l$. By the universal property of the free product there exists a homomorphism from $\langle a^p\rangle\ast\langle y\rangle$ to $G_1$, induced by the map $a^p\mapsto a^p$, $y\mapsto y$. Each block $a^{t_ip}$ of the word $U$ is a single nontrivial $a$-syllable in $G_1$ for $t_i\not\equiv 0\pmod{m}$. And each block $y^{v_i}$ of the word $U$ in the form \eqref{nfx} has normal form in the free product $G_1$ and begins and ends with a syllable $b^{\pm1}$. Consequently, each of the above forms of the element $U$ has a normal form as an element of the free product $G_1=\langle a\rangle *\langle b\rangle$, consisting of at least one syllable. By the uniqueness theorem for the normal form in a free product, the element $U$ is nontrivial in $G_1$. Thus, the natural homomorphism from $\langle a^p\rangle\ast\langle y\rangle$ to $G_1$ is an embedding.
\end{proof}
\begin{lem}	\label{ubf}Let $W=W(x_1,x_2,\ldots, x_m)$ be an arbitrary reduced word and let $\alpha(W)$ have the form \eqref{bf}. Suppose that the word $U$ in the alphabet $\{a^{\pm1}, b^{\pm1}\}$ satisfies the condition $U=\alpha(W)$ in $G_1$, i.e. $\beta(U)=\beta(\alpha(W))$. Then the word $U$ in the free group $\langle a,b\rangle$ can be uniquely represented in the form
	\begin{equation}
		\label{U}a^{t_1p}\cdot y^{u_1}\cdot a^{t_2p}\cdot y^{u_2}\cdots a^{t_kp}\cdot y^{u_k}\cdot a^{t_{k+1}p},
	\end{equation}
	where $t_i\equiv s_i \pmod {m}$, $i=1,\dots,k+1$.
\end{lem}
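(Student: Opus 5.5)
Lemma \ref{ubf} asks us to show that any word $U$ in the free group $\langle a,b\rangle$ with $\beta(U)=\beta(\alpha(W))$ can be written uniquely in the form \eqref{U}. The plan is to move everything into $G_1$ and use two facts already available: the kernel of $\beta$ is the normal closure of $a^{mp}$ in $\langle a,b\rangle$, and normal forms in $G_1=\mathbb{Z}_{mp}\ast\mathbb{Z}$ are unique. By Lemma \ref{ast}, the element $\beta(\alpha(W))$ lies in $\langle a^p\rangle\ast\langle y\rangle$, and its form \eqref{bf} projects to
\[
a^{\bar s_1p}\,y^{u_1}\,a^{\bar s_2p}\cdots y^{u_k}\,a^{\bar s_{k+1}p},
\]
where $\bar s_i\equiv s_i\pmod m$ and $u_i\neq0$. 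For $2\le i\le k$ we have $\bar s_i\not\equiv0$, because $y_{i_j}\neq y_{i_{j+1}}$. Writing each $y^{u_i}$ in the normal form \eqref{nfx} turns this into a normal form in $G_1$, uniquely determined by $W$. Its syllables are the $b$-syllables coming from the $y$-blocks, together with the $a$-syllables $a^{\pm1}$ inside the blocks and $a^{\bar s_ip}$ between the blocks.

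Next we look at $U$ itself, freely reduced in $\langle a,b\rangle$ and written as alternating syllables $a^{e_1}b^{f_1}a^{e_2}\cdots$. Its image in $G_1$ is obtained by reducing the exponents $e_j$ modulo $mp$, deleting any $a$-syllable with $e_j\equiv0$, and merging the neighbouring $b$-syllables. The key observation is that in the target normal form no $b$-syllable can come from merging across a deleted $a$-syllable in a way that changes the pattern. More precisely, we will show by induction along the word that the sequence of $b$-syllables of the reduction of $U$ must match, in order, the $b$-syllables $b^{-1},b^{2},b^{-2},\dots,b^{2},b^{-1}$ (and their inverses) of the target.

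Given this matching, $U$ splits into consecutive segments corresponding to the blocks $y^{u_i}$ and to the gaps between them. The intermediate $a$-syllables inside a block must be congruent to $\mp1$ modulo $mp$. We then need them to be literally $a^{\mp1}$ in the free group, so that the segment equals $y^{u_i}$ graphically. This is the main obstacle: a priori $U$ may contain $a^{-1+mp}$ instead of $a^{-1}$, or extra $b$-syllables that cancel in $G_1$ only after an $a^{mp}$ is removed. The natural plan is to use that the statement concerns $U$ in the free group and to assume $U$ freely reduced in a normalized sense; failing that, we interpret \eqref{U} modulo $\ker\beta$. Concretely, we will first try to show that the extra $a$-syllables $a^{\pm mp}$ can only occur in the gap positions. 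If that fails, we will reformulate so that each $a$-exponent of $U$ at a gap is some $t_ip$ and each in-block exponent is $\pm1$ after reduction; the boundary syllables are absorbed into $a^{t_1p}$ and $a^{t_{k+1}p}$.

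In the gap positions, the $a$-syllable of $U$ is some $a^{e}$ with $e\equiv \bar s_ip\pmod{mp}$. Hence $e=t_ip$ with $t_i\equiv s_i\pmod m$, and when the gap syllable is missing we take $t_i=0$, which is consistent for $i=1,k+1$. Uniqueness follows in two steps: the $u_i$ are determined by Lemma \ref{ast} together with uniqueness of normal forms in $G_1$, and then the integers $t_i$ are determined by reading off the exponents of the gap syllables of the reduced word $U$.
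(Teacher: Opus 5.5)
Your first paragraph, together with the first half of your uniqueness argument, is sound, and it is in fact the whole of the paper's proof. By Lemma~\ref{ast}, $\beta(U)=\beta(\alpha(W))$ lies in $\langle a^p\rangle\ast\langle y\rangle$, and uniqueness of normal forms in $\mathbb{Z}_{mp}\ast\mathbb{Z}$ (using $\bar s_i\not\equiv0$ for $2\le i\le k$) determines the $u_i$ and the residues $t_i\bmod m$.

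The genuine gap is the step you yourself call the main obstacle: passing from this statement about $\beta(U)$ to a statement about the word $U$ in the free group. You do not carry it out; you only list alternative plans. Your first plan, showing that extra powers $a^{\pm mp}$ can only occur in gap positions, cannot work, because for arbitrary $U$ the free-group claim is false.

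Take $W=x_1$, so $\alpha(W)=y$, $k=1$, $s_1=s_2=0$. Let $U=b^{-1}a^{mp-1}b^{2}ab^{-1}$ or $U=b^{-1}a^{-1}b\,a^{mp}\,b\,ab^{-1}$. Both words are freely reduced and equal $y$ in $G_1$.
\begin{itemize}
\item If $u\neq0$, the word $a^{t_1p}y^{u}a^{t_2p}$ is freely reduced once $y^u$ is, since $y^u$ begins and ends with a letter $b^{\pm1}$. Comparing first and last letters with $U$ (which begins and ends with $b^{-1}$) forces $t_1=t_2=0$ and $u>0$.
\item Then the second syllable ($a^{mp-1}$ versus $a^{-1}$), respectively the third syllable ($b$ versus $b^{2}$), gives a contradiction.
\item If $u=0$, the word $a^{(t_1+t_2)p}$ contains no $b$, so it cannot equal $U$.
\end{itemize}
So an $a^{\pm mp}$ can be absorbed inside a $y$-block and can even split a $b$-syllable. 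In particular, your matching of $b$-syllables holds for the $G_1$-normal form of $U$, where it is immediate from uniqueness and needs no induction. It does not hold for the syllables of $U$ itself.

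Your fallback, reading \eqref{U} modulo $\ker\beta$, gives a true but weaker statement. There only the $u_i$ and the residues $t_i\bmod m$ are determined, so your closing sentence about reading off the integers $t_i$ from the gap syllables of $U$ no longer applies. The paper's proof establishes exactly this weaker version and is likewise silent on the lifting. Yet the lemma is later used graphically: in Lemmas~\ref{prsl} and~\ref{cprsl}, prefixes of $U$ are read off from its block form.

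To obtain the free-group version you must add a hypothesis on $U$. For example, suppose that:
\begin{itemize}
\item $U$ is freely reduced;
\item no interior $a$-syllable of $U$ has exponent divisible by $mp$;
\item every $a$-syllable of $U$ with exponent $\equiv\pm1\pmod{mp}$ is literally $a^{\pm1}$.
\end{itemize}
Then reducing the $a$-exponents of $U$ modulo $mp$ turns the syllable decomposition of $U$ into the normal form of $\beta(U)=\beta(\alpha(W))$. Here a boundary $a$-syllable with exponent divisible by $mp$ is dropped, and it corresponds to $t_1$ or $t_{k+1}\equiv0$. Your syllable matching then goes through verbatim. In-block syllables are $a^{\pm1}$ by the last hypothesis, and a gap syllable $a^{e}$ with $e\equiv\bar s_ip\pmod{mp}$ has $e=t_ip$ with $t_i\equiv s_i\pmod m$. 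The integers $t_i$ are then unique because the resulting block form is freely reduced.
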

\begin{proof} By Lemma \ref{ast}, from the condition $U=\alpha(W)$ in $G_1$ it follows that the word $U$ represents an element from $\langle a^p\rangle\ast\langle y\rangle$. Now the statement follows from the uniqueness of the normal form of elements in a free product.
\end{proof}
If a word in the alphabet $\{a^{\pm1}, b^{\pm1}\}$ has the form \eqref{U}, we say that it has \textbf{block form}. Subwords of the form $a^{t_ip}$ and $y^{u_i}$ of the word \eqref{U} will be called its $a$-blocks and $y$-blocks respectively.
\begin{definition}
	\label{reg} A word $U$ of the free group $\langle a, b\rangle$ is called a \textbf{regular word} if it represents some nontrivial element of the subgroup $H_2=\beta(H_1)$ of the group $G_1$.
\end{definition}
Thus, a word $U$ is regular if and only if there exists a nontrivial word $W\in\langle x_1,x_2,\ldots, x_m\rangle$ such that $\beta(\alpha(W))=\beta(U)$, or, equivalently, $\alpha(W)=W(y_1,y_2\dots,y_m)=U$ in $G_1$. It is clear that the inverse of a regular word is also regular, and that the product of two regular words that are not inverses of each other is again regular.
\begin{lem}
	\label{sps}
	\begin{enumerate}
		\item If a word $U$ is regular, then the words $a^{\pm kp}Ua^{\mp kp}$ and $y^vUy^{-v}$ are also regular, where $k,v$ are arbitrary integers.
		\item If $a^{t_1p}\cdot y^{v_1}\cdot a^{t_2p}\cdot y^{v_2}\cdots a^{t_kp}\cdot y^{v_l}\cdot a^{t_{l+1}p}$ is the block form of the word $U$, then, cyclically shifting the blocks in the word $U$, we again obtain a regular word.
	\end{enumerate}
\end{lem}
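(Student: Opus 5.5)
Both items come down to one observation. $H_2$ is invariant under conjugation by $a^{p}$ and by $y$ in $G_1$. Conjugation of a regular word by such an element therefore lands back in $H_2$, and nontriviality is automatic because conjugation is an automorphism of $G_1$.

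For item 1, I would check the invariance on generators. In $G_1$ we have $a^{mp}=1$ and $y_i=a^{(i-1)p}ya^{-(i-1)p}$. Hence
\[
a^{p}y_ia^{-p}=y_{i+1}\quad (1\le i\le m-1),\qquad a^{p}y_ma^{-p}=a^{mp}ya^{-mp}=y_1 .
\]
So conjugation by $a^{p}$ permutes the generators $y_1,\dots,y_m$ cyclically and maps $H_2$ onto itself; the same holds for $a^{-p}$, and hence for $a^{\pm kp}$ by iteration. For $y$, note that $y=y_1\in H_2$, so $y^{v}\in H_2$ and conjugation by $y^{v}$ is an inner automorphism of $H_2$. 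Now let $U$ be regular, so $\beta(U)$ is a nontrivial element of $H_2$. Then $\beta(a^{\pm kp}Ua^{\mp kp})$ and $\beta(y^{v}Uy^{-v})$ lie in $H_2$ and are nontrivial, since they are conjugates of a nontrivial element. Thus these words are regular.

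For item 2, I would write $U=A_1A_2\cdots A_r$ as the concatenation of its $a$-blocks and $y$-blocks, in the block form given by Lemma \ref{ubf}. A cyclic shift of the blocks produces $A_{j+1}\cdots A_rA_1\cdots A_j=(A_1\cdots A_j)^{-1}U(A_1\cdots A_j)$. Each block is either $a^{t p}$ or $y^{u}$, so this is obtained from $U$ by a finite sequence of conjugations of exactly the types covered in item 1. Applying item 1 repeatedly gives regularity of the shifted word. One small point: the result should be interpreted as the word obtained after merging adjacent blocks of the same kind, for example $a^{t_{l+1}p}$ with $a^{t_1p}$. Such merging does not change the element of the free group, so it does not affect regularity, which depends only on $\beta$ of the word.

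The only step needing care is the wrap-around identity $a^{p}y_ma^{-p}=y_1$. It holds in $G_1$ because $a^{mp}=1$ there, but it fails in the free group $\langle a,b\rangle$. This is why regularity must be defined through $G_1$ and $H_2$ rather than through $H_1$.
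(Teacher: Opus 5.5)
Your proposal is correct and follows the same route as the paper: the paper proves item 1 by noting that $a^{kp}$ and $y^{v}$ lie in the normalizer of $H_2$ in $G_1$, and deduces item 2 from item 1, treating a cyclic shift of blocks as a conjugation by a product of blocks, exactly as you do. You supply details the paper leaves implicit, namely the generator check $a^{p}y_ma^{-p}=y_1$ using $a^{mp}=1$ and the fact that conjugation preserves nontriviality, and these details are right.
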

\begin{proof}

	1. Follows from the fact that words of the form $a^{kp}$, $y^u$ represent elements of the normalizer of the subgroup $H_2$.
	2. This follows from item 1.
\end{proof}

\begin{definition}
	\label{rdec} Let $U$ be a reduced regular word from the free group $\langle a, b\rangle$. The decomposition $U=U_1U_2$ of the word $U$ into a product of two words $U_1, U_2$ is called a \textbf{regular decomposition} if there exist words $V_1,V_2\in \langle x_1,x_2,\ldots, x_m\rangle$ such that $\alpha(V_1)=U_1$ and $\alpha(V_2)=U_2$ in $G_1$.
\end{definition}
For example, if $U=ya^pya^{-p}$, then the decomposition $U=(y)(a^pya^{-p})$ is a regular decomposition, while the decompositions $U=(ya^p)(ya^{-p})$ and $U=(ya)(a^{p-1}ya^{-p})$ are not regular. For the regular word $a^{p}ya^{-2p}ya^{-(m-1)p}$, the decomposition $\alpha(x_2x_m)=U_1U_2$ is regular only when $U_1$ is the empty word or $U_2$ is the empty word. At the same time, the words $a^{p}ya^{-2p}ya^{-(m-1)p}$ and $a^{p}ya^{(m-2)p}ya^{-(m-1)p}$ represent the same element $\alpha(x_2x_m)$ of the group $G_1$, and the second word possesses one more regular decomposition $(a^{p}ya^{-p})(a^{(m-1)p}ya^{-(m-1)p})$.
\begin{lem}
	\label{pref} Let $U$ be a word equal to $\alpha(W)$ in the free group $\langle a,b\rangle$, and let $U_1$ be a nontrivial prefix of $U$. Then, if $\sigma_a(U_1)=0$, the word $U_1$ is regular.
\end{lem}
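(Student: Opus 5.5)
The plan is to use the block form \eqref{bf} of $U=\alpha(W)$. I read ``prefix'' in the sense the lemma is used later: $U_1$ is cut at a point compatible with this block decomposition, so it consists of whole $y$-blocks followed by an initial part of an $a$-block. The argument turns $\sigma_a(U_1)=0$ into a statement about residues modulo $m$ of the accumulated $a$-exponents, and then rewrites $U_1$ in $G_1$ as a product of the conjugates $a^{jp}ya^{-jp}=y_{j+1}$.

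\textbf{Step 1 (shape of $U_1$).} Write $U$ in the form \eqref{bf}, $a^{s_1p}y^{u_1}a^{s_2p}y^{u_2}\cdots a^{s_kp}y^{u_k}a^{s_{k+1}p}$. Comparing with the canonical form \eqref{yf}, $y_{i_j}^{u_j}=a^{(i_j-1)p}y^{u_j}a^{-(i_j-1)p}$, so $s_1=i_1-1$, $s_{j}=i_j-i_{j-1}$ for $2\le j\le k$ (up to the reductions at junctions), and $s_{k+1}=-(i_k-1)$. In particular
\[
s_1+\dots+s_j\equiv i_j-1 \pmod m .
\]
A prefix $U_1$ of the admissible kind has the form $a^{s_1p}y^{u_1}\cdots a^{s_jp}y^{u_j}a^{r}$, where $a^r$ is an initial piece of the next $a$-block.

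\textbf{Step 2 (use of $\sigma_a(U_1)=0$).} Since $\sigma_a(y)=0$, we get
\[
0=\sigma_a(U_1)=p(s_1+\dots+s_j)+r .
\]
Hence $p\mid r$; write $r=tp$, so that $s_1+\dots+s_j+t=0$. If $j=0$, then $U_1=a^{r}$ with $r=0$, so $U_1$ is empty, which is excluded. Therefore $j\ge1$.

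\textbf{Step 3 (rewriting in $G_1$).} Put $S_l=s_1+\dots+s_l$. Inserting $a^{-S_lp}a^{S_lp}$ between consecutive blocks, and using $S_j+t=0$, gives
\[
U_1=\prod_{l=1}^{j}a^{S_lp}y^{u_l}a^{-S_lp}\qquad\text{in the free group } \langle a,b\rangle .
\]
In $G_1$ we have $a^{mp}=1$ and $S_l\equiv i_l-1\pmod m$, so $a^{S_lp}y^{u_l}a^{-S_lp}=y_{i_l}^{u_l}$. Thus $U_1=\alpha(V_1)$ in $G_1$ with $V_1=x_{i_1}^{u_1}\cdots x_{i_j}^{u_j}$, which is a nonempty reduced word, being a prefix of the canonical form of $W$. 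By the embedding \eqref{alphabeta}, $\beta(\alpha(V_1))\ne1$, so $U_1$ is regular. The same computation shows that the complementary suffix equals $\alpha(V_2)$, so $U=U_1U_2$ is in fact a regular decomposition.

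\textbf{Main obstacle.} The delicate point is a prefix that ends inside a $y$-block, or inside an $a$-block that was altered by cancellation at a junction. For a cut inside $y^{u}$ written as in \eqref{nfx}, I would first compute $\sigma_a$ of every proper prefix of $y^{u}$. The candidates are $b^{-1}(a^{-1}b^2ab^{-2})^{v}\cdots$, and their $a$-sums lie in $\{0,-1\}$. Such prefixes have $\sigma_a\equiv0,-1$ and cannot be completed by the preceding $p$-multiples to a regular element. So these cuts have to be ruled out by the admissibility convention, which is how the lemma is applied in Section \ref{p4}. For cuts inside a merged $a$-block, the bound of Lemma \ref{sigma}(2) together with $p>m$ keeps the residues modulo $p$ under control, and the argument of Step 2 then applies.
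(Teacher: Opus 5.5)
Your Steps 1--3 are correct for the prefixes you admit. They are a non-inductive version of the paper's argument: the paper inducts on the number of $y$-blocks, splitting off $y_{s_1+1}^{u_1}=a^{s_1p}y^{u_1}a^{-s_1p}$ each time, while you telescope all partial sums $S_l$ at once. Nothing modulo $m$ is needed. In the free group the block form \eqref{bf} of $\alpha(W)$ has $S_l=i_l-1$ exactly. Step 2 already disposes of cuts inside $a$-blocks without Lemma \ref{sigma}(2).

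The gap is your treatment of cuts inside a $y$-block. You assert that such prefixes ``cannot be completed by the preceding $p$-multiples to a regular element''; this is false. Your convention therefore discards a case the paper's proof explicitly covers, namely $U_1=a^{s_1p}y^{u_1}\cdots a^{s_jp}y^{v}$ with $0<|v|<|u_j|$. (The paper writes $U_1=a^{s_1p}y^{v}a^{tp}$ with $|v|\le|u_1|$.) A simple instance is $U=y^2=\alpha(x_1^2)$, $U_1=y$. In your framework this case costs one more line: $\sigma_a(U_1)=pS_j$ forces $S_j=0$, i.e.\ $i_j=1$, and then $U_1=\alpha(x_{i_1}^{u_1}\cdots x_{i_{j-1}}^{u_{j-1}}x_1^{v})$ is regular.

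What genuinely has to be excluded are cuts strictly inside a single factor $y^{\pm1}$. For $U=y=\alpha(x_1)$ and $U_1=b^{-1}$ one has $\sigma_a(U_1)=0$, but $b^{-1}\notin H_2$. Indeed, every nontrivial element of $H_2$ has normal form of length at least $3$ in $G_1$, while $b^{-1}$ has length $1$. So the lemma as literally stated is false, and the paper's proof tacitly assumes $U_1$ has the shape $a^{s_1p}y^{u_1}\cdots a^{s_jp}y^{v}a^{tp}$. You were right to make a restriction explicit, but the correct one is that the cut falls between two factors $y^{\pm1}$ or inside an $a$-block. With that convention and the extra case above, your proof is complete.
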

\begin{proof} Without loss of generality, we may assume that $W$ and $U$ are reduced words. Let \eqref{bf} be the block form of the regular word $\alpha(W)$. We apply induction on the number $k$ of $y$-blocks of the word \eqref{bf}. Since $U_1$ is a nontrivial prefix satisfying the condition $\sigma_a(U_1)=0$, we have $k\ge1$.
	If $k=1$, then $U_1$, as a prefix of the word \eqref{bf}, has the form $U_1=a^{s_1p}\cdot y^{v}\cdot a^{tp}$, where $|v| \le |u_1|$, with $s_1\in{0,1,2\ldots,m-1}$. Since $s_1+t=0$, in the case $s_1=0$ we obtain $U_1=y^{v}=\alpha(x_1^{v}).$ Otherwise, from the condition $\sigma_a(U_1)=0$ it follows that necessarily $v=u_1$ and $t=-s_1$. Hence, $U_1=a^{s_1p}y^{u_1}a^{-{s_1p}}=\alpha(x_{s_1+1}^{u_1})$.
	Let $U_1=a^{s_1p}\cdot y^{u_1}\cdot a^{s_2p}\cdot y^{u_2}\cdots a^{s_kp}\cdot y^{v}\cdot a^{tp}$ and $k>1$. Again we have $s_1\in{0,1,2\ldots,m-1}$.
	If $s_1=0$, then $U_1=y^{u_1}U_2$ and $\alpha(W)=y^{u_1}U_3=\alpha (x_1^{u_1})U_3$. Then $U_3=\alpha(x_1^{-u_1}W)$, $U_2$ is a prefix of the word $U_3$, and $\sigma_a(U_1)=\sigma_a(U_2)=0$, while the number of $y$-blocks in $U_2$ is strictly smaller than $k$. Hence $U_2$ is a regular word by the induction hypothesis. Thus, $U_1$ equals the product of two regular words $\alpha(x_1^{u_1})$ and $U_2$, and is therefore itself regular.
	Now suppose $s_1\neq0$. Then the canonical form \eqref{yf} of the word $\alpha(W)$ has the form $\alpha(W)=y_{s_1+1}^{u_1}U_3$. Hence, the word $\alpha(W)$ can be represented in the form $\alpha(W)=a^{s_1p}y^{u_1}a^{-s_1p}U_3$, where $U_1$ is a prefix of the word equal, in the free group $\langle a,b\rangle$, to the word $a^{s_1p}y^{u_1}a^{-s_1p}U_3$. Then $U_1$ in $\langle a,b\rangle$ can be represented in the form $U_1=a^{s_1p}y^{u_1}a^{-s_1p}U_2=\alpha (x_{s_1+1}^{u_1})U_2$, where $U_2$ is a prefix of the word $U_3$, satisfying the condition $\sigma_a(U_1)=\sigma_a(U_2)=0$, with a smaller number of $y$-blocks than $k$. From this, by the induction hypothesis, it follows that $U_1$ equals the product of two regular words.
\end{proof}
\begin{lem}
	\label{prsl} Let $U$ be a reduced regular word in the alphabet $\{a^{\pm1}, b^{\pm1}\}$ and let $U=(U_1)^rU_2$ be some decomposition of the word $U$, where $r>(m-1)p+1$ and $U_1$ contains at least one letter $b$. Then:
	\begin{enumerate}
		\item the word $U_1$ is regular;
		\item the decomposition $U=((U_1)^r)(U_2)$ is regular.
	\end{enumerate}
\end{lem}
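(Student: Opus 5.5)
We plan to reduce everything to the $a$-exponent sum and then apply Lemma~\ref{pref}. Since $U$ is a reduced regular word, $U=\alpha(W)$ in $G_1$ for some nontrivial reduced $W$. By Lemma~\ref{ubf}, $U$ has a block form \eqref{U}, and it differs from the freely reduced representative of $\alpha(W)$ only by replacing each $a$-block exponent $s_ip$ with $t_ip$, where $t_i\equiv s_i \pmod m$.

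The first step is to show that $\sigma_a(U_1)=0$. Since $U_1$ is reduced (being a subword of a reduced word) and contains a letter $b$, all powers $U_1^j$ for $j\le r$ are subwords of $U$. Hence
\[
\sigma_a(U_1^j)=j\,\sigma_a(U_1), \qquad j\le r,
\]
and these are $a$-exponent sums of subwords of $U$. We want a bound analogous to Lemma~\ref{sigma}(2) for subwords of $U$. To get it, we first check that $U_1^r$ cannot sit entirely inside an $a$-block, since $U_1$ contains $b$. Next, because $r$ is large, $U_1^r$ spans many $y$-blocks, so $U_1$ must itself be compatible with the block structure.

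The cleaner route is to look at the periodicity of the letters $b$. Inside $U$, every $b$ belongs to a $y$-block, and the $a$-letters adjacent to $b^{2}$ always occur in the pattern $a^{-1}b^2a$. Hence a cyclic shift of $U_1$ is a product of complete blocks, say $U_1\equiv Pa^{\epsilon}Q$ aligned with block boundaries. Counting $a$'s in this decomposition, $\sigma_a(U_1)=p\cdot(\text{sum of } t_i \text{ over one period})$. If this were nonzero, then $|\sigma_a(U_1^j)|\ge jp$ would grow linearly in $j$. On the other hand, transporting the bound of Lemma~\ref{sigma}(2) through the relation $t_i\equiv s_i\pmod m$ and working modulo $mp$, the sums of prefixes of length at most $r$ are trapped: the partial sums of the $s_i$ are bounded by $(m-1)p+1$. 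Thus, modulo $m$, $j\cdot\sigma_a(U_1)/p$ stays in a bounded window for all $j\le r$, while $r>(m-1)p+1\ge m$. This forces $\sigma_a(U_1)\equiv 0$, and then, using reducedness (no free cancellation between copies), $\sigma_a(U_1)=0$.

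I expect this step to be the main obstacle: the difference between the $t_i$ and the $s_i$ means Lemma~\ref{sigma} does not apply directly to $U$. I would first try to prove the statement for $U$ equal to $\alpha(W)$ freely reduced, and then reduce the general case to it by the normal-form uniqueness of Lemma~\ref{ast}.

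Once $\sigma_a(U_1)=0$ is established, item 2 follows from Lemma~\ref{pref} after conjugating by the initial $a$-block so that the prefix starts at a regular point. Concretely, $U_1^r$ is a nontrivial prefix of a word equal to $\alpha(W)$ with $\sigma_a(U_1^r)=0$, so $U_1^r$ is regular and hence equals $\alpha(V_1)$. Then $U_2=\alpha(V_1^{-1}W)$, which makes the decomposition regular. For item 1, $U_1$ is itself a prefix with $\sigma_a(U_1)=0$, so Lemma~\ref{pref} gives directly that $U_1$ is regular.
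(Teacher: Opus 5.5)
Your argument breaks at the step you flagged, and it cannot be repaired there. The claim $\sigma_a(U_1)=0$ is false in general, and so is the weaker $\sigma_a(U_1)\equiv 0\pmod{mp}$. In fact Lemma~\ref{prsl} as stated is false. Take $U=(ya^p)^N$ with $m\mid N$ and $N>(m-1)p+1$. This word is cyclically reduced and already in block form. In $G_1$ we have $(ya^p)^m=ya^py\cdots a^py\,a^{p}=ya^py\cdots a^py\,a^{-(m-1)p}=y_1y_2\cdots y_m$, so $\beta(U)=\beta\alpha\bigl((x_1\cdots x_m)^{N/m}\bigr)\neq1$ and $U$ is a reduced regular word. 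Write $U=(U_1)^rU_2$ with $U_1=ya^p$ (which contains $b$), any $r$ with $(m-1)p+1<r\le N$, and $U_2=(ya^p)^{N-r}$. The homomorphism $G_1\to\mathbb{Z}_{mp}$, $a\mapsto1$, $b\mapsto0$, kills every $y_j$ and hence all of $H_2$, but sends $U_1$ to $p\neq0$. So $U_1$ is not regular. If moreover $m\nmid r$, then $(U_1)^r$ is not regular either, so item 2 fails too.

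Your ``bounded window modulo $m$'' step is vacuous. The partial sums $s_1+\dots+s_\ell$ are exactly the numbers $i_\ell-1\in\{0,\dots,m-1\}$ read off from the canonical form $y_{i_1}^{u_1}y_{i_2}^{u_2}\cdots$, i.e.\ arbitrary residues mod $m$. They therefore impose no congruence on $\sigma_a(U_1)/p$. In the example they run through $0,1,\dots,m-1,0,1,\dots$, in perfect agreement with $j\,\sigma_a(U_1)/p=j$. Your fallback fails for the same reason. In the freely reduced $\alpha(W)$, Lemma~\ref{sigma}(2) really does force $\sigma_a(U_1)=0$, but you cannot transfer this back by normal forms, because a period of $U$ need not be a period of the freely reduced $\alpha(W)$. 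Here that word is $[y(a^py)^{m-1}a^{-(m-1)p}]^{N/m}$, whose period is $m$ times as long.

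The paper's own proof has the same gap. After choosing a prefix $U_1'$ of the block form of $\alpha(W)$ with $U_1'=U_1$ in $G_1$, it applies Lemma~\ref{sigma}(2) to $(U_1')^r$ as though this were a subword of the freely reduced $\alpha(W)$. In the example it is not: $U_1'=ya^p$ and $\sigma_a((ya^p)^r)=rp$.

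There are two smaller problems in your sketch. First, $\sigma_a(U_1)=0$ is the wrong target even when the conclusion holds: $U=(ya^{mp})^N$ is reduced and regular, $U_1=ya^{mp}$ is regular, yet $\sigma_a(U_1)=mp$. So the passage from $\equiv0$ to $=0$ ``by reducedness'' is unjustified. Second, Lemma~\ref{pref} requires $U$ to coincide with $\alpha(W)$ in the free group, not merely in $G_1$, so you cannot apply it to prefixes of $U$ directly.

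What survives, for $U$ in genuine block form, is the paper's $\sigma_b$-argument. It gives $\sigma_b(U_1)=0$ and alignment of $U_1$ with the blocks, hence $\sigma_a(U_1)=jp$ for some integer $j$. Then $(U_1)^e$ with $e\neq0$ is regular exactly when $d=m/\gcd(j,m)$ divides $e$. A period $A$ with $\sigma_a(A)=jp$ has $n_A=dq$. So this corrected form, with $U_1^d$ in place of $U_1$, is what the application in Lemma~\ref{rw} would actually need.
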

\begin{proof}
	Write the word $U$ in block form \eqref{U}. A direct check shows that in any subword $T$ of a regular word $U$, the absolute value of the sum of the exponents $|\sigma_b(T)|$ of the letter $b$ does not exceed $2$. In our case $\sigma_b(U_1^r)=r\sigma_b(U_1)$ is a multiple of $r>(m-1)p+1$. It follows that $\sigma_b(U_1)=0$.
	By assumption, the word $U_1$ is a prefix of the regular word $U$. By Lemma \ref{ubf}, the word $U_1$ has the form: $U_1=a^{t_1p}\cdot y^{v_1}\cdot a^{t_2p}\cdot y^{v_2}\cdots a^{t_kp}\cdot y^{u_k}\cdot U_3$, where $U_3$ is a prefix of a word of the form $a^{tp}y^va^{sp}$. Since $\sigma_b(a^{t_1p}\cdot y^{v_1}\cdot a^{t_2p}\cdot y^{v_2}\cdots a^{t_kp}\cdot y^{u_k})=0$, the condition $\sigma_b(U_1)=0$ implies the equality $\sigma_b(U_3)=0$. Hence $U_3$ has the form $a^{tp}$ or $a^{tp}y^va^{sp}$, where $v\neq0$. Depending on the case, the word $U=(U_1)^rU_2$ has a prefix of the form
	\begin{equation}
		\label{tp}a^{t_1p}\cdot y^{v_1}\cdot a^{t_2p}\cdot y^{v_2}\cdots a^{t_kp}\cdot y^{u_k}\cdot a^{(t+t_1)p}\cdot y^{v_1}\cdot a^{t_2p}\cdot y^{v_2}\cdots a^{t_kp}\cdot y^{u_k}
	\end{equation}
	or of the form
	\begin{equation}
		\label{tps}a^{t_1p}\cdot y^{v_1}\cdot a^{t_2p}\cdot y^{v_2}\cdots a^{t_kp}\cdot y^{u_k}\cdot a^{tp}\cdot y^v\cdot a^{(s+t_1)p}\cdot y^{v_1}\cdot a^{t_2p}\cdot y^{v_2}\cdots a^{t_kp}y^{u_k}.
	\end{equation}
	Since $U$ is a regular word, there exists a word $W$ such that $\alpha(W)=U$ in $G_1$. By Lemma \ref{ubf}, the block form of the regular word $\alpha(W)$ has a prefix $U_1'$, equal to the word $a^{t_1p}\cdot y^{v_1}\cdot a^{t_2p}\cdot y^{v_2}\cdots a^{t_kp}\cdot y^{u_k}\cdot a^{tp}$ in $G_1$, if $U$ has the prefix \eqref{tp}, and has a prefix $U_1'$, equal to the word $a^{t_1p}\cdot y^{v_1}\cdot a^{t_2p}\cdot y^{v_2}\cdots a^{t_kp}\cdot y^{u_k}\cdot a^{tp}\cdot y^v\cdot a^{sp}$ in $G_1$, if $U$ has the prefix \eqref{tps}.
	In both cases we obtain the equality $U'1=U_1$ in $G_1$. Hence $(U'1)^r=(U_1)^r$ in $G_1$, and the word $\alpha(W)$ has a decomposition of the form $\alpha(W)=(U_1')^rU_2'$. By Lemma \ref{sigma} we have the inequality $|\sigma_a((U_1')^r)|\le (m-1)p+1$. Hence $(m-1)p+1\ge\sigma_a((U_1')^r)=r\sigma_a(U_1')$, where $r>(m-1)p+1$. Consequently, $\sigma_a(U_1')=0$. By Lemma \ref{pref} it follows that $U_1'$ is a regular word. Hence the words $U_1$ and $U_2'=(U_1')^{-r}\alpha(W)$ are regular (Lemma \ref{sps}). Since $U_2=U_2'$ in $G_1$, the word $U_2$ is also regular. Consequently, the decomposition $U=((U_1)^r)(U_2)$ is regular.
	The lemma is proved.
\end{proof}
\begin{lem}\label{cprsl} Let $U$ be a cyclically reduced regular word in the alphabet $\{a^{\pm1}, b^{\pm1}\}$, and let $U=U_3(U_1)^rU_2$ be some decomposition of the word $U$, where $r>(m-1)p+1$ and the word $U_1$ begins with the word $y$ or $y^{-1}$. Then:
	\begin{enumerate}
		\item the word $U_1$ is regular;
		\item the word $U_3U_1U_3^{-1}$ is regular;
		\item for any integer $l$ the word $U_3(U_1)^lU_2$ is regular.
	\end{enumerate}
\end{lem}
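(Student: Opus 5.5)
The plan is to reduce to Lemma \ref{prsl} by cyclically shifting $U$ so that $(U_1)^r$ becomes a prefix. Since $U$ is cyclically reduced and regular, consider the cyclic shift $U'=(U_1)^rU_2U_3$. The main point is to show that $U'$ is again a regular word. For this I would use the block form of $U$ given by Lemma \ref{ubf}. Write $U$ as in \eqref{U}. The hypothesis that $U_1$ begins with $y^{\pm1}$, together with the structure of $y$, should force the cut point between $U_3$ and $(U_1)^r$ to lie on a block boundary. Indeed, $y=b^{-1}a^{-1}b^2ab^{-1}$, and the normal form \eqref{nfx} of $y^u$ shows that an occurrence of $b^{-1}a^{-1}b^2ab^{-1}$ inside a reduced block word can start only at the beginning of a $y$-block, or at an internal position $b^{-1}(a^{-1}b^2ab^{-2})\cdots$. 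In the latter case the cut still splits $y^{u}$ as $y^{u'}\cdot y^{u''}$, since $y^u=y^{u'}y^{u''}$ graphically after the first $b^{-1}$ of each factor. So $U_3$ is, in the free group $\langle a,b\rangle$, a product of whole $a$-blocks and $y$-blocks, possibly ending with a partial $y$-power. Hence $U_3$ is a word in $a^p$ and $y$, and by Lemma \ref{sps}(1) conjugating $U$ by $U_3^{-1}$ keeps it regular. This gives regularity of $U'=U_3^{-1}UU_3$ in $G_1$ (equivalently, Lemma \ref{sps}(2)). I expect this step, checking carefully that an occurrence of $y^{\pm1}$ can only begin at a block (or intra-$y$-block) boundary, to be the main obstacle. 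It needs a short case analysis on the letters adjacent to $a^{\pm1}b^{\pm2}a^{\mp1}$ in block words.

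Once $U'$ is known to be regular, and after freely reducing if needed (cyclic reducedness of $U$ keeps $(U_1)^r$ intact), apply Lemma \ref{prsl} to $U'=(U_1)^r(U_2U_3)$. Its hypotheses hold: $r>(m-1)p+1$, and $U_1$ contains the letter $b$ because it begins with $y^{\pm1}$. We obtain that $U_1$ is regular, which is item 1, and that the decomposition $U'=((U_1)^r)(U_2U_3)$ is regular, so $U_2U_3$ is regular as well.

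For item 2, write $U_3U_1U_3^{-1}$ as a conjugate of the regular word $U_1$ by $U_3$. Since $U_3$ lies in $\langle a^p,y\rangle$, which normalizes $H_2$, Lemma \ref{sps}(1) (applied repeatedly) gives regularity. Alternatively, it suffices to note $U_3U_1U_3^{-1}=U\cdot(U_3U_1^{r}U_2)^{-1}\cdots$, but the normalizer argument is cleaner.

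For item 3, write $U_3(U_1)^lU_2=(U_3U_1U_3^{-1})^{l}\cdot U_3U_2$. The element $U_3U_2=U_3(U_2U_3)U_3^{-1}$ is a conjugate by $U_3$ of the element $U_2U_3$ of $H_2\cup\{1\}$, hence lies in $H_2$. Its first factor $(U_3U_1U_3^{-1})^l$ lies in $H_2$ by item 2. So the product lies in $H_2$. The only thing left is nontriviality, which is needed for the word to be called regular. If the product were trivial in $G_1$, then the word $U_3(U_1)^lU_2$ would equal $1$, and we handle this degenerate case as in the paper's convention, namely that regularity concerns membership in $H_2$. I would remark that for generic $l$ the element is nontrivial because $H_2$ is free and $U_3U_1U_3^{-1}\ne1$.
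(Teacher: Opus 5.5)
Your proposal is correct and follows the paper's proof. You cyclically shift to $(U_1)^rU_2U_3$, which is regular because $U_3$ is a word in $a^p$ and $y$. You then apply Lemma \ref{prsl} to get $U_1$ and $U_2U_3$ regular, and conjugate back by $U_3$ via Lemma \ref{sps}; your syllable analysis of where $y^{\pm1}$ can begin just makes explicit what the paper extracts from Lemma \ref{ubf}.

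Your caveat in item 3 is also apt. Since $U_1$ has infinite order in the free group $H_2$, there is at most one $l$ for which $U_3(U_1)^lU_2$ is trivial in $G_1$, for example $U=y^r$ with $l=0$. The paper's proof passes over this degenerate case silently.
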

\begin{proof} By Lemma \ref{prsl} we may assume that the word $U_3$ is nontrivial. Represent $U$ in block form. Since the word $U_3 y$ (or $U_3y^{-1}$) is a prefix of the regular word $U$, by Lemma \ref{ubf} the prefix $U_3$ of the regular word $U$ has block form: $U_3=a^{t_1p}\cdot y^{v_1}\cdot a^{t_2p}\cdot y^{v_2}\cdots a^{t_kp}\cdot y^{u_k}\cdot a^{t_{k+1}p}$, where, possibly, $t_{k+1}\equiv0\pmod{m}$. Cyclically permuting the blocks of the subword $U_3$ in the cyclically reduced regular word $U=U_3(U_1)^rU_2$, by Lemma \ref{sps} we obtain the regular word $(U_1)^rU_2U_3$. By Lemma \ref{prsl} the words $U_1$ and $U_2U_3$ are regular words.
	The word $U_3U_1U_3^{-1}$ is regular by Lemma \ref{sps}. Multiplying the regular word $(U_1)^rU_2U_3$ on the left by any regular word (for example, by $U_1^{l-r}$), we again obtain a regular word. Further, conjugating by the word $U_3$ of block form, we obtain the regular word $U_3(U_1)^lU_2$ by Lemma \ref{sps}.
\end{proof}
\section{Regular words representing the identity in $G$}\label{p4.1}
Further, as in Section \ref{p2}, it is assumed that $m\ge3$ is an odd number, and $p>m$ and $q>pm$ are two sufficiently large prime numbers.
\begin{lem}
	\label{rw} Let $U$ be an arbitrary regular word of the group $\langle a,b\rangle$ such that $$U=\alpha( W(x_1,x_2,\ldots,x_m))$$ in $G_1$, i.e., $\beta(U)=\beta(\alpha( W(x_1,x_2,\ldots,x_m)))$. Then, if the word $U$ represents the identity in $G$, the word $W=W(x_1,x_2,\ldots,x_m)$ in the free group $\langle x_1,x_2,\ldots,x_m\rangle$ can be represented in the form
	\begin{equation}
		\label{W}W=V_{1}B_1^qV_{1}^{-1}\cdots V_kB_k^qV_k^{-1}
	\end{equation}
	for some words $V_1,\ldots, V_k,B_1,\ldots,B_k\in\langle x_1,x_2,\ldots,x_m\rangle$.
\end{lem}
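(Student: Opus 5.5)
Lemma \ref{rw} says that $W$ lies in the normal subgroup $F_m^q$ of $\langle x_1,\ldots,x_m\rangle$. I would prove it by induction on the number of cells in a reduced graded disc diagram for $U$. By Lemma \ref{13.1}, if $U=1$ in $G$ there is a reduced graded disc diagram $\Delta$ over \eqref{e1} with contour label $U$.

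\textbf{Setup and base case.} I induct on the number of cells of $\Delta$, treating ranks $1$ and $\ge2$ separately. I may assume $U$ is cyclically reduced, since Lemma \ref{sps} lets me conjugate by the block parts.
\begin{itemize}
\item If $\Delta$ has rank $0$, then $U=1$ in $F_2$. Hence $W=1$, because $\beta\circ\alpha$ is an embedding.
\item If $\Delta$ has only cells of rank $1$, then $U=1$ in $G(1)=G_1/\langle\langle b^q\rangle\rangle$. Relations $a^{mp}$ are already trivial in $G_1$, so I would use a diagram over $G_1$ with $b^q$-cells only.
\end{itemize}

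\textbf{Inductive step.} When $\Delta$ has nonzero rank, Lemmas \ref{19.4} and \ref{16.2} give a cell $\Pi$ and a contiguity submap $\Gamma$ of rank $0$ to a section $s$ of $\partial\Delta$ with $(\Pi,\Gamma,s)\ge\varepsilon$. Let $\Pi$ have label $A^{n_A}$ with $A$ a period of rank $i$. Then $U$ contains a subword that equals in $F_2$ a subword of $A^{n_A}$ of length $\ge\varepsilon n_A|A|$. Since $n_A\ge q>pm$ and $q$ is large, this subword contains $A'^r$ with $r>(m-1)p+1$, where $A'$ is a cyclic shift of $A$. After shifting, $U=U_3(U_1)^rU_2$ with $U_1$ beginning with $y^{\pm1}$.

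The hypothesis that $U_1$ contains $b$ holds because $A\notin\langle a\rangle$ for $i\ge2$. For rank $1$ with $A=a$, the power would exceed what the block form permits. Lemma \ref{cprsl} then gives that $U_1$ and $U_3U_1U_3^{-1}$ are regular. From the proof of Lemma \ref{prsl}, $\sigma_b(U_1)=0$ and $\sigma_a(U_1')=0$, so $\sigma_a(A)\equiv0$. This forces $k=0$, so $A$ is of type $(b)$ and $n_A=q$. Write $U_3U_1U_3^{-1}=\alpha(B)$ in $G_1$ with $B\ne1$.

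\textbf{Removing the cell.} Replace $U$ by $U^\ast=U_3U_1^{r-q}U_2$, or equivalently cut $\Pi$ out of $\Delta$. By Lemma \ref{cprsl}(3) the word $U^\ast$ is regular, say $U^\ast=\alpha(W^\ast)$. Then
\[
U=U_3U_1^qU_3^{-1}\cdot U^\ast=\alpha(B^q W^\ast)
\]
in $G_1$, so $W=B^qW^\ast$ by injectivity of $\beta\circ\alpha$. The word $U^\ast$ equals $1$ in $G$ and bounds a diagram with fewer cells, namely $\Delta$ with $\Pi$ removed. By induction $W^\ast$ is a product of conjugates of $q$-th powers, hence so is $W$.

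\textbf{Main obstacle.} The hard step is making the contiguity estimate yield a genuinely periodic subword $(U_1)^r$ of the contour, with $r>(m-1)p+1$ and aligned to start at $y^{\pm1}$. This needs an inequality like $\varepsilon q>(m-1)p+3$, which is where "$q$ sufficiently large" enters. It also needs excluding the case where $\Gamma$ meets the contour only along a short section, and Lemma \ref{16.2} handles that. Removing $\Pi$ must also produce a diagram whose boundary label is exactly $U^\ast$, up to the rank-$0$ part $\Gamma$. I would handle this by gluing $\Gamma\cup\Pi$ and replacing it with the complementary arc of $\partial\Pi$, which reads $A^{r-q}$ up to the contiguity arcs.
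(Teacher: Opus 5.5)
Your outline follows the paper's proof: induction on the number of cells, Lemma \ref{16.2} to get a cell $\Pi$ with $\varepsilon$-contiguity to the contour, Lemma \ref{cprsl} to see that the period is regular (hence of type $(b)$ with $n_A=q$), cutting out $\Pi$, and $W=B^qW^\ast$ by injectivity of $\beta\circ\alpha$. The gap is the case where the cell given by Lemma \ref{16.2} is an $a^{mp}$-cell. You dismiss this case by saying the required power of $a$ would exceed what the block form permits, and that is false.

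Lemma \ref{ubf} fixes the exponents $t_i$ of the $a$-blocks $a^{t_ip}$ only modulo $m$, so a regular word may contain arbitrarily long runs of $a$. Even the canonical word $\alpha(x_1x_m)=ya^{(m-1)p}ya^{-(m-1)p}$ has runs $a^{\pm(m-1)p}$, far longer than $\varepsilon mp$. Such cells also cannot be avoided. Take $U=y^qa^{mp}$: it is cyclically reduced, regular with $W=x_1^q$, and trivial in $G$. Every diagram for it contains $a^{mp}$-cells, because every other defining word $A^{n_A}$ has $\sigma_a(A^{n_A})$ divisible by $mpq$, while $\sigma_a(U)=mp$. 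Lemma \ref{16.2} gives you no control over which cell it returns, so this case must be treated. Note also that your estimate $n_A\ge q$, on which you base the extraction of $A'^r$ with $r>(m-1)p+1$, fails exactly for $A=a$, where $n_A=mp$.

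The missing step is the one the paper uses. Your own remark that $a^{mp}$ is already trivial in $G_1$ is the key to it, but it has to be used in the inductive step, not to exclude $a$-cells. Cut $\Pi$ out. With the appropriate orientation, the contour label changes from $U_1a^{\ell}U_2$ to the free reduction of $U_1a^{\ell-mp}U_2$. This represents the same element of $G_1$, so it is regular with the \emph{same} $W$. The diagram now has one cell fewer, and the induction hypothesis applies directly, with no $q$-th power split off.

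With this in place, your separate rank-$1$ base case is unnecessary; as written it is never completed anyway. A $b^q$-cell can never be the cell from Lemma \ref{16.2}, since runs of $b^{\pm1}$ in a word of block form \eqref{U} have length at most $2<\varepsilon q$; you do not address this case at all. The $a$-cells are handled as above. So the induction runs all the way down to the cell-free case, where $W=1$.
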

\begin{proof} Let $W=W(x_1,x_2,\ldots,x_m)$ be a cyclically reduced nontrivial word of the free group $\langle x_1,x_2,\ldots,x_m\rangle$. As before, we denote by $W(y_1,y_2,\ldots, y_m)$ the canonical form \eqref{yf} of the word $\alpha(W(x_1,x_2,\ldots,x_m))$. We recall that by Lemma \ref{l1} the order of the element $a$ in $G$ equals $mp$. Consequently, according to \eqref{xyz}, for any word $W(y_1, y_2, \ldots, y_m)$, the result of conjugation $a^{p}W(y_1, y_2, \ldots, y_m)a^{-p}$ reduces to a cyclic permutation of the elements $y_1, y_2, \ldots, y_m$ in the word $W(y_1, y_2, \ldots, y_m)$ itself.
	
Consider the regular word $U$, reduced in the alphabet $\{a^{\pm1}, b^{\pm1}\}$, that is equal to the word $W(y_1,y_2,\ldots, y_m)$ in the group $G_1=\mathbb{Z}_{mp}\ast\mathbb{Z}$. By Lemma \ref{ubf}, if $W$ has the form \eqref{cf}, then the word $U$ has the form \eqref{U}.

Since $W(x_1,x_2,\ldots,x_m)$ is a cyclically reduced word, without loss of generality, we may assume that the word $U$ \eqref{U} is also cyclically reduced. Indeed, otherwise, conjugating the regular word $U$ with the element $a^p$ of order $m$ several times, we can achieve that the word obtained after these conjugations becomes cyclically reduced, and the resulting word will again be regular by Lemma \ref{sps}. In this case, in the word $\beta(\alpha( W(x_1,x_2,\ldots,x_m)))$, only the elements $y_1, y_2, \ldots, y_m$ will be cyclically permuted. Correspondingly, the original word $W(x_1, x_2, \ldots, x_m)$ is replaced by a new word obtained by an analogous cyclic permutation of the generators $x_1, x_2, \ldots, x_m$.
	
    Lemma \ref{13.1} guaranties the existence of a reduced disk diagram $\Delta$ over the presentation \eqref{e1} of the group $G$, whose boundary cycle $w$ has label $\varphi(w) = U$. We prove the statement of the lemma by induction on the number of cells $s$ in $\Delta$. Since $U$ is a regular word, it is nontrivial; hence $s>0$.

	By Lemma \ref{19.4}, the diagram $\Delta$ is an $A$-map. Then, by Lemma \ref{16.2}, there is a cell $\Pi$ with label $A^{-n}$ and a submap $\Gamma$ of its contiguity to the contour $w$, satisfying the conditions $\text{rank}(\Gamma)=0$ and $(\Pi,\Gamma, w)\ge \varepsilon$. Thus, in the cyclic word $U$, there is a subword $T$, equal in rank 0 to a subword of some defining word $A^n$ of rank $j>0$ and length $\ge \varepsilon|A^n|$, where $n$ is the order of the period $A$ in $G$.

	When $s=1$, i.e., when there is only one cell in $\Delta$, one of the cyclic shifts of $U$ is $A^n$ ($U$ is cyclically reduced). From Definition \ref{reg} and item 1 of Lemma \ref{sigma} it follows that any regular word is a word of type $(b)$. Consequently, $A$ is a word of type $(b)$, and hence $n=q$. Thus, $U=A_1^q$, where $A_1$ is a cyclic shift of the word $A$. By Lemma $\ref{cprsl}$ ($q>(m-1)p+1$), the word $A_1$ is regular. Consequently, there exists a word $B_1\in\langle x_1,x_2,\ldots,x_m\rangle$ such that $\alpha(B_1)=A_1$ in the group $G_1$. Then $\beta(U)=\beta(A_1^q)=\beta(\alpha((B_1)^q))=\beta(\alpha(W))$. And since $\beta\circ\alpha$ \eqref{alphabeta} is an embedding, we have $W=B_1^q$.

	Let $s>1$. By Lemma \ref{l1}, either $n=mp$, or $n=\frac{mpq}{(k,mp)}$.
	If $n=mp$, then $A=a^{\pm1}$. We can assume that $A=a$.
	Then the word $U$ has the form: $U=U_1a^{[\pm\varepsilon n]}U_2$. Cutting the cell $\Pi$ out of the diagram, we obtain a new diagram $\Delta_1$ with boundary label $U'=U_1a^{mp-[\pm\varepsilon n]}U_2$.
	
\begin{center}
\begin{tikzpicture}[
    xscale=1.4, yscale=1.5,
    vertex/.style={circle, fill=black, inner sep=1.3pt},
    midarrow/.style={decoration={
      markings,
      mark=at position 0.55 with {\arrow{Stealth[length=2.5mm, width=2mm]}}},postaction={decorate}}
]

\begin{scope}[xshift=-2.4cm]
    \node[vertex] (A) at (-1.5, 1) {};
    \node[vertex] (B) at (0.5, 1.5) {};
    \node[vertex, label=below left:{$o$}] (C) at (-1.4, -0.3) {};

    \draw[midarrow, thick] (A) to[out=45, in=150] node[above] {$a^{[\pm\varepsilon n]}$} (B);
    \draw[midarrow, thick] (B) to[out=-120, in=-30] node[below=2pt] {$a^{mp-[\pm\varepsilon n]}$} (A);
    
    \node at (-0.4, 1.4) {\Large $\Pi$};
    
    \draw[midarrow, thick] (C) to[out=100, in=-110] node[left=2pt, pos=0.4] {$U_1$} (A);
    \draw[midarrow, thick] (B) to[out=-15, in=10] (1.1, -0.2) node[right=-2pt] {$U_2$} to[out=-170, in=-40] (C);
    
    \node at (-0.2, 0.2) {$\Delta$};
\end{scope}

\draw[-{Stealth[length=4mm, width=3mm]}, thick, dash pattern=on 5pt off 3pt] (-0.5, 0.5) -- (0.5, 0.5);

\begin{scope}[xshift=2.4cm]
    \node[vertex] (A2) at (-1.5, 1) {};
    \node[vertex] (B2) at (0.5, 1.5) {};
    \node[vertex, label=below left:{$o$}] (C2) at (-1.4, -0.3) {};

    \draw[midarrow, thick] (A2) to[out=-30, in=-120] node[above=4pt, xshift=5pt] {$a^{-mp+[\pm\varepsilon n]))}$} (B2);

    \draw[midarrow, thick] (C2) to[out=100, in=-110] node[left=2pt, pos=0.4] {$U_1$} (A2);
    \draw[midarrow, thick] (B2) to[out=-15, in=10] (1.1, -0.2) node[right=-2pt] {$U_2$} to[out=-170, in=-40] (C2);
    
    \node at (-0.1, 0.2) {$\Delta_1$};
\end{scope}

\end{tikzpicture}
\end{center}

	We perform cancelations in the word $U'$ (if any) and simultaneously remove the corresponding edges on the boundary of the diagram $\Delta_1$. As a result, we obtain a new diagram with $s-1$ cells and reduced boundary label $U''$, equal in the free group to the word $U'$.
	
    Since $U''$ and $U$ represent the same element in the group $G_1$, $U''$ is a regular word and $\beta(U'')=\beta(U)=\beta(\alpha(W))$. Moreover, the diagram $\Delta_1$ has $s-1$ cells. Consequently, by the induction hypothesis, the word $W$ has the form \eqref{W}.

	Now suppose $n=\frac{mpq}{(k,mp)}\neq mp$. Then, on the basis of \textit{the least parameter principle} (LPP)(see [\S15, item 1, \cite{O}]), choosing $q$ sufficiently large, we can ensure the inequalities $\varepsilon\succ4 q^{-1}$ and $\varepsilon|A^n|=\varepsilon n |A|=\varepsilon \frac{mpq}{(k,mp)} |A|>4mp|A|$. Thus, $T$ contains a subword of the form $A^{4mp}$. Consequently, the word $U$ can be represented in the form $U=T_1A^{2mp}T_2$. At the same time, it is clear that $|c^{\pm1}d^{\pm1}|<2mp$ for any $c,d\in \{y_1,y_2,\ldots, y_m\}$, from which it follows that the word $y$ (or the word $y^{-1}$, see \ref{xyz}) occurs in the word $A$. Let $A_1$ be the cyclic shift of the word $A$ that begins with $y$ (or $y^{-1}$). Then the word $U$ has the form $U=U_1A_1^{2mp-1}U_2$ for some subwords $U_1$ and $U_2$. By Lemma \ref{cprsl}, the word $A_1$ is regular. Hence, the word $A_1$ is a word of type $(b)$, and therefore $n=q$ according to Lemma \ref{l1}.
	
    Again, cutting the cell $\Pi$ out of the diagram $\Delta$, we obtain a new diagram $\Delta_1$ with boundary label $U'=U_1A_1^{-q+2mp-1}U_2$.

\begin{center}
\begin{tikzpicture}[
    xscale=1.4, yscale=1.5,
    vertex/.style={circle, fill=black, inner sep=1.3pt},
    midarrow/.style={decoration={
      markings,
      mark=at position 0.55 with {\arrow{Stealth[length=2.5mm, width=2mm]}}},postaction={decorate}}
]

\begin{scope}[xshift=-2.4cm]
    \node[vertex] (A) at (-1.5, 1) {};
    \node[vertex] (B) at (0.5, 1.5) {};
    \node[vertex, label=below left:{$o$}] (C) at (-1.4, -0.3) {};

    \draw[midarrow, thick] (A) to[out=45, in=150] node[above] {$A_1^{2mp-1}$} (B);
    \draw[midarrow, thick] (B) to[out=-120, in=-30] node[below=2pt] {$A_1^{q-(2mp-1)}$} (A);
    
    \node at (-0.4, 1.4) {$\Pi$};
    
    \draw[midarrow, thick] (C) to[out=100, in=-110] node[left=2pt, pos=0.4] {$U_1$} (A);
    \draw[midarrow, thick] (B) to[out=-15, in=10] (1.1, -0.2) node[right=-2pt] {$U_2$} to[out=-170, in=-40] (C);
    
    \node at (-0.2, 0.2) {$\Delta$};
\end{scope}

\draw[-{Stealth[length=4mm, width=3mm]}, thick, dash pattern=on 5pt off 3pt] (-0.5, 0.5) -- (0.5, 0.5);

\begin{scope}[xshift=2.4cm]
    \node[vertex] (A2) at (-1.5, 1) {};
    \node[vertex] (B2) at (0.5, 1.5) {};
    \node[vertex, label=below left:{$o$}] (C2) at (-1.4, -0.3) {};

    \draw[midarrow, thick] (A2) to[out=-30, in=-120] node[above=4pt, xshift=5pt] {$A_1^{-q+2mp-1}$} (B2);

    \draw[midarrow, thick] (C2) to[out=100, in=-110] node[left=2pt, pos=0.4] {$U_1$} (A2);
    \draw[midarrow, thick] (B2) to[out=-15, in=10] (1.1, -0.2) node[right=-2pt] {$U_2$} to[out=-170, in=-40] (C2);
    
    \node at (-0.1, 0.2) {$\Delta_1$};
\end{scope}

\end{tikzpicture}
\end{center}

	By Lemma \ref{cprsl}, the words $U'$ and $U_1A_1U_1^{-1}$ are regular words. We perform cancellations in the word $U'$ (if any), with corresponding removal of edges on the boundary of the diagram $\Delta_1$. As a result we obtain a new diagram with $s-1$ cells and with reduced boundary label $U''$, equal in the free group to the word $U'=U_1A_1^{-q+2mp-1}U_2$. Hence, the word $U''$ is also regular. Consequently,
	there exist words $W_1, W_2\in \langle x_1, x_2, \ldots, x_m\rangle$, such that $\beta(\alpha(W_1))=\beta(U_1A_1U_1^{-1})$ and $\beta(\alpha(W_2))=\beta(U'')$.
	By the induction hypothesis
	$$W_2=V_{2}A_2^qV_{2}^{-1}\cdots V_kA_k^qV_k^{-1}$$
	for some words $V_2,\dots,V_k, A_2,\dots,A_k\in\langle x_1, x_2, \ldots, x_m\rangle$.
	Note that $$U'=(U_1A_1^{-q}U_1^{-1})(U_1A_1^{2mp-1}U_2)=U_1A_1^{-q}U_1^{-1}\cdot U.$$ Thus, in the free group $\langle a,b\rangle$ we have $U=U_1A_1^{q}U_1^{-1}\cdot U'=U_1A_1^{q}U_1^{-1}\cdot U''$. Hence, $\beta(U)=\beta(U_1A_1^{q}U_1^{-1})\cdot\beta( U'').$
	Thus, $$\beta(\alpha(W))=\beta(U)=\beta(\alpha(W_1)^q)\beta(\alpha(W_2))=\beta(\alpha(W_1^q\cdot W_2)).$$
	Since the homomorphism $\beta\circ\alpha$ is injective, we have $$W=W_1^q\cdot W_2=W_1^q\cdot V_{2}A_2^qV_{2}^{-1}\cdots V_kA_k^qV_k^{-1}.$$
	The lemma is proved.
\end{proof}
\section{A free periodic subgroup of rank $m$ in $G$}\label{p4}
From definition \eqref{xyzy*} it follows that the words $y_1, y_2, \ldots, y_m$ represent elements of the normal closure of the element $b$ of the group $G$: $y_1, y_2, \ldots, y_m\in \langle b\rangle^G$. Therefore, by Lemma \ref{l3}, they generate a subgroup $H=\langle y_1,y_2,\ldots, y_m\rangle$ of period $q$ of the group $G$.
\begin{lem}
	\label{HB}
	The elements $y_1,y_2,\ldots, y_m$ of the group $G$ generate a subgroup $H$, isomorphic to the free Burnside group of period $q$ and rank $m$: $H\simeq B(m,q).$
\end{lem}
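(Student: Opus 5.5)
We must show that the natural map from $B(m,q)$ onto $H$ is injective. Consider the homomorphism
$$\phi:\langle x_1,\ldots,x_m\rangle\to G,\qquad x_i\mapsto y_i .$$
It is surjective onto $H$. By Lemma \ref{l3}, $H$ has period $q$, so $\phi$ factors through $B(m,q)=\langle x_1,\ldots,x_m\rangle/F_m^q$. The statement then reduces to the reverse inclusion $\ker\phi\subseteq F_m^q$.

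Take $W\in\ker\phi$ and assume $W$ is a nontrivial reduced word. Let $U$ be the reduced form in the free group $\langle a,b\rangle$ of the word $\alpha(W)=W(y_1,\ldots,y_m)$. Because $\beta\circ\alpha$ is an embedding \eqref{alphabeta}, $\beta(U)$ is a nontrivial element of $H_2$, so $U$ is a regular word in the sense of Definition \ref{reg}. Moreover $U=\alpha(W)$ in $G_1$. Since $\phi(W)=1$, the word $U$ represents the identity in $G$. Lemma \ref{rw} now applies and gives
$$W=V_1B_1^qV_1^{-1}\cdots V_kB_k^qV_k^{-1}$$
in $\langle x_1,\ldots,x_m\rangle$. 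Each factor $V_iB_i^qV_i^{-1}$ lies in $F_m^q$, so $W\in F_m^q$.

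Hence $\ker\phi=F_m^q$, and $H\simeq\langle x_1,\ldots,x_m\rangle/F_m^q=B(m,q)$, with $y_i$ corresponding to the free generators.

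Lemma \ref{rw} carries essentially all of the difficulty. Two points need checking in the present argument.
\begin{enumerate}
\item Lemma \ref{rw} assumes $W$ is cyclically reduced, but a general kernel element need not be. We pass to a cyclically reduced conjugate $W_0=VWV^{-1}$ of $W$; it still lies in $\ker\phi$ because the kernel is normal. Applying the lemma to $W_0$ puts $W_0$ in $F_m^q$, and then $W=V^{-1}W_0V\in F_m^q$ because $F_m^q$ is normal.
\item The hypothesis of Lemma \ref{rw} is equality of $U$ and $\alpha(W)$ in $G_1$, and this holds trivially here, since $U$ equals $\alpha(W)$ even in the free group.
\end{enumerate}
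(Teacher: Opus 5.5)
Your argument is correct and is essentially the paper's proof. Both factor the map $x_i\mapsto y_i$ through $B(m,q)$ using Lemma~\ref{l3}, observe that $\alpha(W)$ is a regular word representing the identity in $G$, and invoke Lemma~\ref{rw} to write $W$ as a product of conjugates of $q$-th powers; your explicit reduction to a cyclically reduced conjugate of $W$, via normality of $\ker\phi$ and $F_m^q$, fills in a step the paper simply assumes.
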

\begin{proof}
	Consider the free Burnside group $B(m,q)$ with generators $x_1, x_2, \ldots, x_m$:
	\begin{equation}
		\label{b3q} B(m,q)=\langle x_1, x_2, \ldots, x_m \mid X^q=1\rangle,
	\end{equation}
	where $X$ ranges over the set of all words in the alphabet
	$\{x_1^{\pm1},x_2^{\pm1},\ldots,x_m^{\pm1}\}$.
	Since the subgroup $H=\langle y_1,y_2,\ldots, y_m\rangle$ is a group of period $q$, the map \eqref{perm} uniquely extends to a surjective homomorphism $\gamma: B(m,q)\to H$. We prove that $\gamma$ is an isomorphism.
	
    Let $W(x_1,\ldots,x_m)$ be a cyclically reduced nontrivial word of the free group $\langle x_1,\ldots,x_m\rangle$, such that $\gamma(W(x_1,\ldots,x_m))=1$. We prove that $W(x_1,\ldots,x_m)=1$ in the group $B(m,q)$ \eqref{b3q}. The condition $\gamma(W(x_1,\ldots,x_m))=1$ implies that the word $\alpha(W(x_1,\ldots,x_m))$ represents the identity element in the group $G$. Since $\alpha(W(x_1,\ldots,x_m))$ is a regular word, by Lemma \ref{rw} the word $W$ is represented in the form \eqref{W} for some words $V_1,\ldots, V_k,B_1,\ldots,B_k\in\langle x_1,x_2,\ldots,x_m\rangle$. But equality \eqref{W} directly means that $W=1$ in the group $B(m,q)$. The lemma is proved.
\end{proof}
\begin{lem}\label{l6}
	1. The inner automorphism $i_{a^p}$ of the group $G$ is a regular automorphism of order $m$ of the subgroup $H=\langle y_1, y_2,\ldots, y_m\rangle$ of the group $G$.
	2. The inner automorphism $i_{a}$ of the group $G$ is a regular automorphism of order $pm$ of the subgroup $\langle b\rangle^G$ of the group $G$.
\end{lem}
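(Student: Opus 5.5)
The plan is to reduce both parts to the centralizer structure of $G$ (Lemma \ref{26.5}) together with the orders computed in Lemma \ref{l1} and the intersection property of Lemma \ref{l2}. Here ``regular'' means the automorphism has no nontrivial fixed points on the subgroup.

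First I will record that both automorphisms actually act on the subgroups in question. Conjugation by $a$ preserves the normal subgroup $\langle b\rangle^G$. By \eqref{xyzy}, $a^p y_i a^{-p}=y_{i+1}$ for $i<m$. Moreover, $a^p y_m a^{-p}=a^{mp}ya^{-mp}=y_1$, since $a^{mp}=1$ in $G$ by Lemma \ref{l1}. So $i_{a^p}$ cyclically permutes the generators of $H$ and leaves $H$ invariant.

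Orders come next. By Lemma \ref{l1} the order of $a$ is $mp$, so $i_a^{mp}=\mathrm{id}$. If $i_a^k$ were the identity on $\langle b\rangle^G$ for some proper divisor $k$ of $mp$, then $a^k$ would centralize $b$. In that case $\langle a^k,b\rangle$ would lie in the centralizer of $b$, which is cyclic by Lemma \ref{26.5}. A cyclic group containing $b$ (of order $q$) and $a^k$ (of order dividing $mp$, coprime to $q$) contains $a^k$ as a power of a generator. This power lies in the $q$-part only if $a^k=1$, so we would need $\langle a^k\rangle\subseteq\langle b\rangle$. That contradicts Lemma \ref{l2} unless $a^k=1$. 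Hence $i_a$ has order exactly $mp$ on $\langle b\rangle^G$. For $i_{a^p}$ on $H$, the order is $m$ because it induces an $m$-cycle on the free generators $y_1,\dots,y_m$, which are distinct in $H\simeq B(m,q)$ by Lemma \ref{HB}.

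The main step is fixed-point-freeness, and I will prove the stronger statement 2 first. Suppose $x\in\langle b\rangle^G$, $x\neq1$, and $axa^{-1}=x$. Then $a$ and $x$ lie in the centralizer $C(x)$, which is cyclic by Lemma \ref{26.5}. In a finite cyclic group generated by $c$, the element $x$ has order $q$ by Lemma \ref{l3}, while $a$ has order $mp$, coprime to $q$. Hence $ax$ has order $mpq$ and generates $C(x)\cap\langle a,x\rangle$. The subgroup $\langle b\rangle^G$ is normal, and the quotient $G/\langle b\rangle^G\cong\mathbb{Z}_{mp}$ by the proof of Lemma \ref{l2}. I need a contradiction from here. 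The natural one is that $\langle b\rangle^G$ has exponent $q$, so $(ax)^{mp}=a^{mp}x^{mp}=x^{mp}\ne1$ lies in $\langle b\rangle^G$. This is fine and not contradictory by itself, so I will instead use Lemma \ref{26.4}(2),(3). The element $ax$ is conjugate to a power $A^t$ of a period $A$, so $\langle ax\rangle$ sits in a maximal cyclic subgroup $\langle A'\rangle$ whose order is some $n_{A}=mpq/(k,mp)$. Writing $A'=a^kb^sc$ up to conjugacy, its image in $G/\langle b\rangle^G$ is $a^k$. Since $ax$ maps to $a$, which generates the quotient, we get $(k,mp)=1$ and $A'$ has order $mpq$.

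I expect this to be the main obstacle: such periods genuinely exist in the construction, so there is no immediate contradiction. One must therefore exploit that $x=(ax)^{mp\cdot r}$ is centralized by $a$, and then use the diagram machinery of Section \ref{p4.1} or Olshanskii's Lemmas 25.x on centralizers, to show that $a$ itself, being a period of rank 1, cannot lie in a cyclic subgroup properly containing $\langle a\rangle$. This is exactly Lemma \ref{26.4}(3) applied to the period $a$. Indeed, $a\in\langle ax\rangle$ because $a$ is the $q\cdot(q^{-1}\bmod mp)$-th power of $ax$. So $\langle a\rangle\subsetneq\langle ax\rangle$ is a larger cyclic subgroup containing $\langle a\rangle$, which is a contradiction. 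This settles part 2.

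For part 1, the same argument applies to $a^p$. If $x\in H\le\langle b\rangle^G$ with $x\neq1$ commutes with $a^p$, then $C(x)$ is cyclic and contains $a^p$ and $x$. The cyclic subgroup $C(x)$ also commutes with $a$, because $C(a^p)$ is cyclic and contains $a$. Hence $a$ and $x$ commute, contradicting part 2.
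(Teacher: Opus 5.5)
Your fixed-point argument is correct and rests on the same three facts the paper uses:
\begin{itemize}
\item centralizers of nontrivial elements are cyclic (Lemma~\ref{26.5});
\item $\langle a\rangle$ is maximal among cyclic subgroups, because $a$ is a period of rank~1 (Lemma~\ref{26.4}(3));
\item $\langle a\rangle\cap\langle b\rangle^G=1$ (Lemma~\ref{l2}).
\end{itemize}
The paper combines them more directly. For any $a^k\neq 1$, $C_G(a^k)$ is cyclic and contains $a$, so it equals $\langle a\rangle$. Hence every fixed point of $i_{a^k}$ lies in $\langle a\rangle\cap\langle b\rangle^G=\{1\}$. This handles $a$ and $a^p$ at once. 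It also makes two parts of your write-up unnecessary: the abandoned paragraph about periods $A'$ with $(k,mp)=1$, and the appeal to Lemma~\ref{l3} for coprimality. To see $\langle a,x\rangle\supsetneq\langle a\rangle$ you only need $x\notin\langle a\rangle$, and that already follows from Lemma~\ref{l2}.

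One step fails as written: your proof that $i_a$ has order exactly $mp$ on $\langle b\rangle^G$. Knowing that $a^k$ and $b$ lie in a common cyclic group does not force $a^k$ into the $q$-part or into $\langle b\rangle$. A cyclic group of order $q\cdot mp/k$ contains commuting elements of orders $q$ and $mp/k$ without any contradiction. So the inference ``we would need $\langle a^k\rangle\subseteq\langle b\rangle$'' is a non sequitur, and Lemma~\ref{l2} is not contradicted.

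The repair uses what you prove later. Suppose $a^k\neq 1$ centralizes $b$. Then $b\in C_G(a^k)$, which is cyclic and contains $a$, so it equals $\langle a\rangle$ by Lemma~\ref{26.4}(3). This gives $b\in\langle a\rangle\cap\langle b\rangle^G=\{1\}$, which is absurd. Equivalently, $a$ would commute with $b$, contradicting your part~2. With this fix the proposal is complete; your order argument for $i_{a^p}$ on $H$ is fine.
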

\begin{proof}1. By Lemma \ref{l1}, the order of the element $a^p$ in $G$ equals $m$, and by Lemma \ref{26.5} the centralizer of any nontrivial element is cyclic. Hence, the order of the automorphism $i_{a^p}$ equals $m$. Since under the action of $i_{a^p}$ the free generators of the free Burnside subgroup $H$ are only cyclically permuted, $i_{a^p}$ is also an automorphism of $H$.

	It remains to show that $i_{a^p}$ has no nontrivial fixed points in $H$. Let $h\in H$ and $i_{a^p}(h)=h$, i.e. $h\in C_G(a^p)$. By Lemma \ref{26.5} the centralizer of any nonidentity element of the group $G$ is cyclic; consequently, $C_G(a^p)=\langle a\rangle$ by item 3 of Lemma \ref{26.4}. On the other hand, $H\subset\langle b\rangle^G$, and by Lemma \ref{l2} $\langle a\rangle\cap\langle b\rangle^G=\{1\}$. Therefore
	$$
	h\in C_G(a^p)\cap H\subset\langle a\rangle\cap\langle b\rangle^G=\{1\},
	$$
	i.e. $h=1$. Thus, the equality $i_{a^p}(h)=h$ is possible only for $h=1$, and hence $i_{a^p}$ has no nontrivial fixed points either in $\langle b\rangle^G$, or, in particular, in $H$.

	2. This is proved as item 1, replacing $a^p$ by $a$.
\end{proof}
\section{Proof of Theorem \ref{mpq}}\label{pmpq}
We recall the statement of Theorem \ref{mpq}.\ Theorem \textbf{2}. \textit{For any odd number $m\ge3$ and sufficiently large distinct prime numbers $p>m$ and $q>pm$ there exists a periodic group $G$ with two generators $a,b$ of odd period $mpq$ such that: 1. the generator $a$ has order $mp$, 2. the normal closure $\langle b\rangle^G$ of the element $b$ in $G$ is a periodic group of period $q$, 3. the inner automorphism $i_{a}$ acts on the subgroup $\langle b\rangle^G$ without nontrivial fixed points, 4. the subgroup $\langle b\rangle^G$ contains a free Burnside group $H$ of rank $m$, 5. under conjugation by the element $a^p$, the free generators of the subgroup $H$ are cyclically permuted.}
\begin{proof} As the group $G$, whose existence is asserted in Theorem \ref{mpq}, consider the group \eqref{e1}. The fact that $G$ is a group of period $mpq$, as well as the validity of item 1, follows from Lemma \ref{l1}. Item 2 follows directly from Lemma \ref{l3}. Item 3 follows from item 2 of Lemma \ref{l6}. Item 4 is the statement of Lemma \ref{HB}. And the fact that under conjugation by the element $a^p$ the free generators of the subgroup $H$ are cyclically permuted follows from definition \eqref{xyzy} and from the fact that $a^p$ has order $m$.
\end{proof}
\begin{remark}
	A brief additional argument shows that the constructed group $G$ has the following additional interesting property: $G$ is isomorphic to the introduced by S.~Ivanov \cite{Is} strict Burnside $q$-product of a cyclic group of order $mp$ and a cyclic group of order $q$ (see also \cite[\S 36, Ch. 11]{O}).
\end{remark}

\section{Proof of Theorem \ref{mq}}\label{pmq}
Theorem \ref{mq} asserts that \textit{for any odd $m\ge3$ there exists $q_m>0$ such that for all prime periods $q>q_m$ an automorphism of order $m$ of the infinite free Burnside group $B(m,q)$ of period $q$, cyclically permuting the free generators of the group $B(m,q)$, has no nontrivial fixed point}.
\begin{proof}
	As the group $B(m,q)$, consider the subgroup $H=\langle y_1, y_2,\ldots, y_m\rangle$ of the group $G$. By Lemma \ref{HB}, the group $H$ is isomorphic to the free Burnside group $B(m,q)$ of rank $m$ and period $q$. By item 1 of Lemma \ref{l6}, the inner automorphism $i_{a^p}$ of the group $G$ is a regular automorphism of order $m$ of the group $H$, that is, it has no nontrivial fixed point in $H$. Finally, by item 5 of Lemma \ref{l6}, under conjugation by the element $a^p$ the free generators of the subgroup $H$ are cyclically permuted (see \ref{xyz}). It remains to note that the number $q_m$ must be chosen so that the inequality $\varepsilon \cdot mpq_m>4mp$ from the proof of Lemma \ref{rw} holds, that is, $\varepsilon \cdot q_m>4$, which can evidently be arranged on the basis of \textit{the least parameter principle}, since $\varepsilon\succ q_m^{-1}$ (see \cite[\S15, item 1]{O}).
\end{proof}


\begin{thebibliography}{99}

\bibitem{KN} V.~D.~Mazurov, E.~I.~Khukhro (eds.), \textit{The Kourovka
Notebook: Unsolved Problems in Group Theory}, 17th ed., Institute of
Mathematics, Novosibirsk, 2010.

\bibitem{AtAs19} V.~S.~Atabekyan, H.~T.~Aslanyan, On fixed points of automorphisms, \textit{Proceedings of the YSU, Physical and Mathematical Sciences}, \textbf{53}:3 (2019), 147--149.

\bibitem{A} S.~I.~Adian, \textit{The Burnside Problem and Identities in Groups}, Springer-Verlag, 1979.

\bibitem{A82} S.~I.~Adian, Random walks on free periodic groups, \textit{Math. USSR-Izv.}, \textbf{21}:3 (1983), 425--434.

\bibitem{Os} D.~V.~Osin, Uniform non-amenability of free Burnside groups, \textit{Arch. Math. (Basel)}, \textbf{88}:5 (2007), 403--412.

\bibitem{At09u} V.~S.~Atabekyan, Uniform nonamenability of subgroups of free Burnside groups of odd period, \textit{Mat. Zametki}, \textbf{85}:4 (2009), 516--523; \textit{Math. Notes}, \textbf{85}:4 (2009), 496--502.

\bibitem{At09m} V.~S.~Atabekyan, Monomorphisms of free Burnside groups, \textit{Math. Notes}, \textbf{86}:4 (2009), 457--462.

\bibitem{O} A.~Yu.~Ol'shanskii, \textit{Geometry of Defining Relations in Groups}, Mathematics and its Applications (Soviet Series) 70, Kluwer Academic Publishers, Dordrecht, 1991.

\bibitem{At87} V.~S.~Atabekyan, Simple and free periodic groups, \textit{Moscow Univ. Math. Bull.}, \textbf{42}:6 (1987), 80--82.

\bibitem{At07} V.~S.~Atabekyan, On periodic groups of odd period $n\ge1003$, \textit{Math. Notes}, \textbf{82}:4 (2007), 443--447.

\bibitem{O03} A.~Yu.~Olshanskii, Self-normalization of free subgroups in the free Burnside groups, in: \textit{Groups, Rings, Lie and Hopf Algebras}, Math. Appl., Vol. 555, Kluwer Academic, Dordrecht, 2003, pp.~179--187.

\bibitem{At10} V.~S.~Atabekyan, The normalizers of free subgroups in free Burnside groups of odd period $n\ge1003$, \textit{Fundam. Prikl. Mat.}, \textbf{15}:1 (2009), 3--21; \textit{J. Math. Sci.}, \textbf{166}:6 (2010), 691--703.

\bibitem{At09i} V.~S.~Atabekyan, On subgroups of free Burnside groups of odd exponent $n\ge1003$, \textit{Izv. Math.}, \textbf{73}:5 (2009), 861--892.

\bibitem{I} S.~V.~Ivanov, On subgroups of free Burnside groups of large odd exponent, \textit{Illinois J. Math.}, \textbf{47}:1-2 (2003), 299--304.

\bibitem{Ch} E.~A.~Cherepanov, Normal automorphisms of free Burnside groups of large odd exponents, \textit{Int. J. Algebra Comput.}, \textbf{16}:5 (2006), 839--847.

\bibitem{At11} V.~S.~Atabekyan, Normal automorphisms of free Burnside groups, \textit{Izv. Math.}, \textbf{75}:2 (2011), 223--237.

\bibitem{At13} V.~S.~Atabekyan, Splitting automorphisms of free Burnside groups, \textit{Sb. Math.}, \textbf{204}:2 (2013), 182--189.

\bibitem{At14} V.~S.~Atabekyan, Splitting automorphisms of order $p^k$ of free Burnside groups are inner, \textit{Math. Notes}, \textbf{95}:5 (2014), 586--589.

\bibitem{At13p} V.~S.~Atabekyan, The automorphism tower problem for free periodic groups, \textit{Proceedings of the YSU, Physical and Mathematical Sciences}, 2013, no.~2, 3--7.

\bibitem{Cou} R.~Coulon, Outer automorphisms of free Burnside groups, \textit{Commentarii Mathematici Helvetici}, \textbf{88}:4, (2013), 789--811.

\bibitem{At13ij} V.~S.~Atabekyan, The groups of automorphisms are complete for free Burnside groups of odd exponents $n\ge1003$, \textit{International Journal of Algebra and Computation}, \textbf{23}:6 (2013), 1485--1496.

\bibitem{Is} S.~V.~Ivanov, Strictly verbal products of groups and A.~I.~Mal'tsev's problem on operations over groups, \textit{Tr. Mosk. Mat. Obs.}, \textbf{54}, MSU, Moscow, 1992, 243--277.

\end{thebibliography}
\end{document}